\newtheorem{thm}{Theorem}[section]
\newtheorem{prop}[thm]{Proposition}
\theoremstyle{definition}
\newtheorem{defn}[thm]{Definition}
\newtheorem*{defn*}{Definition}
\newtheorem{ex}[thm]{Example}
\newtheorem{rem}[thm]{Remark}
\newcommand{\A}{\mathbb{A}}
\newcommand{\F}{\mathbb{F}}
\newcommand{\Q}{\mathbb{Q}}
\newcommand{\qq}{\mathbb{Q}}
\newcommand{\pp}{\mathbb{P}}
\newcommand{\kk}{\mathbb{K}}
\newcommand{\zz}{\mathbb{Z}}
\newcommand{\oo}{\mathcal{O}}
\newcommand{\dd}{\mathcal{D}}
\newcommand{\Ss}{\mathcal{S}}
\newcommand{\rr}{\mathcal{R}}
\newcommand{\xx}{\mathcal{X}}
\newcommand{\Cl}{\operatorname{Cl}}
\numberwithin{equation}{section}
\begin{document}

\title{On deformations of toric varieties}

\author[A.~Laface]{Antonio Laface}
\address{
Departamento de Matem\'atica,
Universidad de Concepci\'on,
Casilla 160-C,
Concepci\'on, Chile}
\email{alaface@udec.cl}

\author[M.~Melo]{Manuel Melo}
\address{
Departamento de Matem\'atica,
Universidad de Concepci\'on,
Casilla 160-C,
Concepci\'on, Chile}
\email{manuelmelo@udec.cl}

\subjclass[2010]{
14D15, 14M25.\\
Both authors have been partially supported 
by Proyecto FONDECYT Regular N. 1150732,
Proyecto Conicyt/DAAD PCCI13005 and 
Proyeto Anillo ACT 1415 PIA Conicyt.
}

\maketitle

\begin{abstract}
Let $X$ be a smooth complete toric variety. We describe
the Altmann-Ilten-Vollmert equivariant deformations of
toric varieties in the language of Cox rings. More
precisely we construct 
one parameters families $\pi\colon\mathcal X\to\mathbb{A}^1$ 
of deformations of $X$, such that the total space $\mathcal X$ 
of the deformation is a $T$-variety of complexity one, defined
by a trinomial equation, and the map $\pi$ is equivariant with 
respect to the torus action. Moreover we show that the images
of all these families via the Kodaira-Spencer map form a basis
of the vector space $H^1(X,T_X)$.
\end{abstract}

\section*{Introduction}
The topic of deformations of toric varieties
has been studied by K. Altmann in \cite{Alt} and
A. Mavlyutov in \cite{Mav}.
In the affine case they describe toric 
deformations in a combinatorial way via
polyhedral decompositions of linear sections
of the defining cone of the toric variety.
The theory of polyhedral divisors
is later developed in \cite{AH} and \cite{AHS}
as a generalization of toric varieties 
to T-varieties, i.e. varieties coming 
with a torus action.
N. Ilten and R. Vollmert 
make use of the language
of polyhedral divisors
in \cite{IV} to
describe deformations of
T-varieties of complexity one.
Their method involves decomposing the
polyhedral data of the varieties,
similar to what Altmann
did for the affine case.
Moreover, in the case of smooth toric
varieties they also prove that
such deformations are in correspondence
with a generating set of the
space of infinitesimal deformations
of the starting variety.
In \cite{HS}, we are presented
with an explicit way to compute
the Cox ring of a T-variety,
starting from its polyhedral representation.
This serves as the main connection
between the work
of N. Ilten and R. Vollmert and
the present paper.

This paper is devoted to studying
deformations of smooth toric varieties
from the point of view of Cox rings,
in a similar spirit of \cite{Mav}.
Starting from a toric variety $X$ and
some extra combinatorial data,
we describe the Cox ring of a complexity
one variety $\xx$ which fits into
to a one-parameter
deformation $\xx\rightarrow\A^1$
of $X$, as shown in Theorem~\ref{one-param}.
After that, we proceed to study the
corresponding Kodaira-Spencer
map in Theorem \ref{ks}

The variety $\xx$ turns out to be the 
same variety introduced in~\cite{IV} and described
with the language of
polyhedral divisors.
Moreover, and much like
what was done for polyhedral divisors,
we show that the deformations we describe
generate the space of infinitesimal
deformations of $X$.
As applications of this theory we study
deformations of scrolls and deformations
of hypersurfaces of smooth toric varieties.

The paper is organized as follows:
Section 1 of this work covers some concepts
and properties of deformation theory
and toric geometry
which will be used thorughout the
rest of the article.
Section 2 is the central section of this paper,
where we explain
how to construct deformations
of a toric variety $X$,
as well as the fact that the images of
these deformations,
under the
Kodaira-Spencer map,
generate H$^1(X,T_X)$.
Section 3 is a summary on the language of
polyhedral divisors that was mentioned above.
We also mention Ilten's and Vollmert's way
of finding deformations of T-varieties
and note that these deformations are
equivalent the ones we give in the previous section.
Lastly, in Section 4, we apply our
results to the study of rational scrolls
over $\pp^1$,
finding exactly those which are rigid
and proving that every scroll
deforms to a rigid one, and to 
deformations of hypersurfaces
of toric varieties.

{\em Acknowledgments}. It is a pleasure to thank
Klaus Altmann, Nathan Ilten and Robin Guilbot
for useful discussions on the subject.

\section{Preliminaries}
\label{sec:1}
We recall here some basic facts about deformation theory
and toric varieties.
For the rest of this article, $\kk$ will be
an algebraically closed field of characteristic 0.
\subsection{Generalities on deformations}
\begin{defn}
Let $X$ be a scheme
over $\kk$.
A {\it deformation} of $X$ 
over a scheme $S$
is a flat surjective morphism of schemes $\pi:\mathcal{X}\rightarrow S$ that fits in
a cartesian diagram

$$\xymatrix@R=20pt@C=20pt{X=
{\rm Spec}(\kk)\times_S\xx
\ar[d] \ar@{->}[rr] && \mathcal{X}\ar[d]^{\pi}\\
{\rm Spec}(\kk) \ar[rr]^{s}&&S}$$

If $S$ is algebraic,
then for each rational point $t\in S$,
the scheme-theoretic fiber $\xx(t)$
is also called a {\it deformation} of $X$.
Given another deformation $\pi'\colon\xx'\rightarrow S$
of $X$, we say
that $\pi$ and $\pi'$ are {\it isomorphic} if there is a morphism
$\phi:\xx\rightarrow\xx'$ inducing the identity over $X$ and
such that the diagram

 \[
 \xymatrix@R=10pt@C=10pt{
 & \xx\ar[dd]|!{[d];[d]}\hole^(.35){\pi}
     \ar[rd]^\phi & \\
 X\ar[rr]\ar[dd]\ar[ur]
 & &\xx'\ar[dd]^(.35){\pi'}\\
 & S\ar@{=}[rd]
 & \\
 {\rm Spec}(\kk)\ar[rr]\ar[ur]&& S
 }
 \]
is commutative.
We denote the set of isomorphism classes
of deformations
by ${\rm Def}_X(S)$.
\end{defn}

\begin{thm}\label{kscorr}\cite[Theorem 2.4.1]{Ser}
If $X$ is a smooth scheme,
there is an isomorphism of vector spaces
$$\kappa: {\rm Def}_X(\kk[t]/\langle t^2\rangle)\stackrel{\sim}\longrightarrow {\rm H}^1(X,T_X),$$
where $T_X$ is the tangent sheaf of $X$.
\end{thm}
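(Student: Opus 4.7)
The plan is to construct $\kappa$ via \v{C}ech cohomology with respect to an affine open cover $\{U_i\}$ of $X$, following the standard Kodaira--Spencer recipe. Let $D=\operatorname{Spec}(\kk[t]/\langle t^2\rangle)$. Given a first-order deformation $\pi\colon\xx\to D$, the first step is to exploit smoothness to show local triviality: each restriction $\xx|_{U_i}$ is a flat deformation of the smooth affine scheme $U_i$, hence trivial, so there exist isomorphisms $\alpha_i\colon U_i\times D\xrightarrow{\sim}\xx|_{U_i}$ that reduce to the identity on the central fibre $U_i$.

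The second step is to read off the transition data. On each overlap $U_{ij}=U_i\cap U_j$, the composition $\theta_{ij}=\alpha_j^{-1}\circ\alpha_i$ is an automorphism of $U_{ij}\times D$ over $D$ restricting to the identity on $U_{ij}$. The key algebraic observation is that any such automorphism corresponds, at the level of rings, to a $\kk$-linear map $\oo(U_{ij})[t]/\langle t^2\rangle\to\oo(U_{ij})[t]/\langle t^2\rangle$ of the form $f+tg\mapsto f+t(g+D_{ij}(f))$, where $D_{ij}\colon\oo(U_{ij})\to\oo(U_{ij})$ is forced by the multiplicativity of $\theta_{ij}^\sharp$ to be a derivation, i.e.\ a section of $T_X$ over $U_{ij}$. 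The cocycle identity $\theta_{ik}=\theta_{jk}\circ\theta_{ij}$ on triple overlaps then translates into $D_{ik}=D_{ij}+D_{jk}$, so $(D_{ij})$ is a \v{C}ech $1$-cocycle with values in $T_X$.

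Third, I would check that the construction is independent of the auxiliary choices. Different local trivializations $\alpha_i'=\alpha_i\circ(\operatorname{id}+tE_i)$ with $E_i\in\Gamma(U_i,T_X)$ modify $(D_{ij})$ by the \v{C}ech coboundary $\delta(E_i)$, and an isomorphism of deformations in the sense of the diagram above produces the same modification, so $\kappa([\xx])\in H^1(X,T_X)$ is well defined. Passing to \v{C}ech cohomology for an arbitrary affine cover and using that $T_X$ has vanishing higher cohomology on affines (so \v{C}ech computes sheaf cohomology, by Leray), we obtain the map $\kappa$. Linearity follows from the observation that the natural addition on $\mathrm{Def}_X(\kk[t]/\langle t^2\rangle)$, gotten by pulling back along the sum map $D\sqcup_{\operatorname{Spec}\kk} D\to D$, corresponds exactly to addition of the derivations $D_{ij}$.

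Finally, for bijectivity I would run the construction in reverse: given a representative $1$-cocycle $(D_{ij})$, the automorphisms $\operatorname{id}+tD_{ij}$ of $U_{ij}\times D$ satisfy the cocycle condition, hence glue the trivial pieces $U_i\times D$ into a scheme $\xx$ flat over $D$ with central fibre $X$; its Kodaira--Spencer class is visibly $[(D_{ij})]$, giving surjectivity. Injectivity: if $\kappa([\xx])=0$, a coboundary representation $D_{ij}=E_j-E_i$ lets one modify the $\alpha_i$ to globally compatible trivializations, showing $\xx\cong X\times D$. The step requiring the most care is the second one, namely the identification of infinitesimal automorphisms with derivations and the verification that, after the bookkeeping of refinements and reindexing, the local data indeed assemble into an honest element of $H^1(X,T_X)$ rather than of some larger group. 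Smoothness of $X$ is used crucially both in guaranteeing local triviality in Step~1 and in ensuring that $T_X$ is locally free, so that \v{C}ech cohomology on an affine cover computes $H^1(X,T_X)$.
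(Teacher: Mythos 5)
The paper offers no proof of this statement---it is quoted directly from Sernesi \cite{Ser}*{Theorem 2.4.1}---and your argument is precisely the standard \v{C}ech--Kodaira--Spencer proof given there: local triviality of first-order deformations of smooth affines, transition automorphisms identified with derivations, the cocycle/coboundary bookkeeping, and the reverse gluing construction for bijectivity. Your outline is correct and correctly isolates where smoothness enters, so there is nothing to add beyond noting that the comparison of \v{C}ech and sheaf cohomology you invoke is unproblematic here (for $H^1$ it holds in the colimit over covers even without separatedness).
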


Let $X$ be a smooth algebraic variety
and consider a deformation
$\pi\colon \xx\rightarrow S$ of $X$.
Giving $\varphi\in T_{S,s}$
is equivalent to giving a morphism
$\varphi\colon{\rm Spec}\left(\kk[t]/\langle t^2\rangle\right)
\rightarrow S$ with image $s$.
Pulling back the deformation
by $\varphi$, we obtain
a deformation

$$\xymatrix@R=20pt@C=20pt{X
\ar[d] \ar@{->}[rr] && \xx\times_S{\rm Spec}\left(\kk[t]/\langle t^2\rangle\right)\ar[d]^{\pi'}\\
{\rm Spec}(\kk) \ar[rr]^{s}&&
{\rm Spec}\left(\kk[t]/\langle t^2\rangle\right)}$$

which, by Theorem \ref{kscorr}, corresponds
to an element of ${\rm H}^1(X,T_X)$.
\begin{defn}
The above construction gives a map
$$T_{S,s}\rightarrow {\rm H}^1(X,T_X)$$
called the \textit{Kodaira-Spencer map}
of the deformation $\pi$.

Note that in the case $S={\rm Spec}\,\kk[x]$,
the morphism $\varphi$ above is
uniquely defined up to scalar multiplication,
so the image of the Kodaira-Spencer map
is determined by a single element in H$^1(X,T_X)$.
\end{defn}

\subsection{Toric varieties}
A variety $X$ is called a {\it toric variety} if
it contains an $n$-dimensional torus as
a Zariski open subset in a way such that the
action of the torus on itself extends to
an action of the torus on $X$.
Torus varieties can be completely described
in a purely combinatorial way, via
the concept of toric fans. Information
about this topic can all be found in \cite{CLS}.
We will very briefly go through the concept.
Let $N$ be a lattice of rank $n$
and let $M$ be its dual.
A fan $\Sigma$ in $N\otimes_\mathbb{Z}\mathbb{Q}$
is a collection of cones that is closed under
intersection and cone faces.
Once a toric fan $\Sigma$ is given,
we can define
a toric variety $X_\Sigma$
by gluing the affine toric varieties
${\rm Spec}\,\kk[\sigma^\vee\cap M]$
as $\sigma$ runs through $\Sigma$.

Let $X$ be an irreducible normal variety
with finitely generated divisor class group
${\rm Cl}(X)$ and $\Gamma(X,\mathcal O^*)
\simeq \kk$, i.e. the only global invertible
regular functions are the constants.
The {\em Cox ring} of $X$ is~\cite{ADHL}:
$$\mathcal{R}(X):=\bigoplus_{[D]\in{\rm Cl}(X)}
\Gamma(X,\mathcal{O}_X(D)).$$

In the case that $X$ is also a toric variety,
let $D_1,\ldots,D_r$ be its prime invariant divisors.
It can be shown (cf. \cite[Ch. II, \S 1.3]{ADHL}) that
$$\mathcal{R}(X)=\kk[T_1,\ldots, T_r],
\quad {\rm deg}(T_i)=[D_i].$$
Elements in this ring are said to be
\textit{in Cox coordinates}.

\subsection{The tangent sheaf of a
toric variety}\label{tangenttoric}
Let $X$ be a smooth complete toric variety
with defining fan $\Sigma\subseteq N_\qq$
and character group $M$. 
%We denote by $\Gamma(\Sigma)$
%the graph whose vertices are the one dimensional
%cones $\Sigma(1)$ of $\Sigma$ and such that two vertices
%are joined by an edge if and only if the corresponding
%two-dimensional cone is in $\Sigma$.
By the Euler exact sequence for the tangent sheaf
$T_X$ of $X$, the cohomology group of $T_X$ are
graded by $M$. In particular
\[
 H^1(X,T_X) = \bigoplus_{m\in M}H^1(X,T_X)_m.
\]
\begin{defn} (cf. \cite[\S 2.1]{Ilt})
\label{triples}
Let $m\in M$ be such that there exists $\varrho\in\Sigma(1)$
with $m(\varrho) = -1$, where with abuse of notation
we identify the one dimensional cone $\varrho$ with
its primitive generator.
Define the graph $\Gamma_\varrho(m)$ whose 
set of vertices is
\[
 {\rm Vertices}(\Gamma_\varrho(m))
 :=
 \{\varrho'\in\Sigma(1)\setminus\{\varrho\}\, :\,  m(\tau)<0\},
\]
and two vertices are joined by an edge if and only if they are
rays of a common cone of $\Sigma$.
%Let $\Gamma_\varrho(m)$ be the subgraph of $G(\Sigma)$ spanned
%by the vertices
%\[
% \{\varrho'\in\Sigma(1)\setminus\{\varrho\} 
% \text{ such that } m(\varrho') < 0\}
%\]
%Let $C$ be a connected component of $\Gamma_\varrho(m)$.
If $C$ is a proper component of $\Gamma_\varrho(m)$ we
say that the triple $(m,\varrho,C)$ is {\em admissible}.
\end{defn}

To any admissible triple $(m,\varrho,C)$ one can associate
a cocycle of $H^1(X,T_X)_m$ in the following 
way. Define a derivation $\partial_{m,\varrho}\in {\rm Der}(\kk[M],
\kk[M])$ by
\[
 \partial_{m,\varrho}\colon \kk[M]\to\kk[M]
 \qquad
 \chi^u\mapsto u(\varrho)\chi^{u+m}.
\]
The announced cocycle is
\begin{equation}
\label{co}
 \xi(m,\varrho,C)
 =
 \{\alpha(\sigma,\tau)\cdot\partial_{m,\varrho}\, :\, \sigma,\tau
 \in\Sigma(n)\}\in H^1(X,T_X)_m
\end{equation}
where $\alpha(\sigma,\tau)$ equals $1$ if 
$\sigma(1)\cap C$ is non-empty
and $\tau(1)\cap C$ is empty, it equals $-1$
if the roles of $\sigma$ and $\tau$ are exchanged
and it equals $0$ otherwise. 
\begin{prop}\cite[Thm 6.5]{IV}
\label{basis-tangent}
The cocycles $\xi(m,\varrho,C)$ span the 
vector space $H^1(X,T_X)$.
\end{prop}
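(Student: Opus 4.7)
Since $H^1(X,T_X)$ is $M$-graded and each cocycle $\xi(m,\varrho,C)$ is homogeneous of weight $m$, the claim can be proved one weight at a time. To describe $H^1(X,T_X)_m$ concretely I would use the Batyrev--Cox generalized Euler sequence of a smooth toric variety,
\[
 0 \to \oo_X^{\,r-n} \to \bigoplus_{\varrho\in\Sigma(1)}\oo_X(D_\varrho) \to T_X \to 0,
\]
together with the standard toric vanishing $H^i(X,\oo_X)=0$ for $i>0$, which yields
\[
 H^1(X,T_X)_m \;\cong\; \bigoplus_{\varrho\in\Sigma(1)} H^1(X,\oo_X(D_\varrho))_m.
\]
Since the cocycle $\xi(m,\varrho,C)$ is built from the derivation $\partial_{m,\varrho}$, which is supported in the single summand indexed by $\varrho$, it suffices to show that the classes $\xi(m,\varrho,C)$, as $C$ ranges over proper components of $\Gamma_\varrho(m)$, span the graded piece $H^1(X,\oo_X(D_\varrho))_m$.

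To compute this summand I would use Demazure's formula via \v{C}ech cohomology on the affine cover $\{U_\sigma\}_{\sigma\in\Sigma(n)}$. A direct check shows that $\chi^m$ extends to a regular section of $\oo_X(D_\varrho)$ on $U_\sigma$ iff $m(\varrho')\geq -\delta_{\varrho,\varrho'}$ for every ray $\varrho'\in\sigma(1)$. Let $V\subseteq|\Sigma|$ denote the union of the maximal cones over which $\chi^m$ does not extend. A case-by-case analysis of $m(\varrho)$ — using contractibility of $V$ when $m(\varrho)\geq 0$, and a retraction of $V$ onto the star of $\varrho$ when $m(\varrho)\leq -2$ — shows that $H^1(X,\oo_X(D_\varrho))_m=0$ unless $m(\varrho)=-1$, which is precisely the admissibility condition of Definition~\ref{triples}.

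Assuming $m(\varrho)=-1$, the bad cones are exactly those containing some ray $\varrho'\neq\varrho$ with $m(\varrho')<0$, and Demazure's formula identifies $H^1(X,\oo_X(D_\varrho))_m$ with $\widetilde H^0(V,\kk)$. The connected components of $V$ are in bijection with those of $\Gamma_\varrho(m)$, because two bad rays are adjacent in the graph exactly when they belong to a common maximal cone, i.e.\ to the same component of $V$. A basis of $\widetilde H^0(V,\kk)$ is given by indicator functions of proper components, and transporting such a basis back through the Euler sequence produces \v{C}ech $1$-cocycles on $T_X$ of the form $\{\alpha(\sigma,\tau)\,\partial_{m,\varrho}\}$: the sign $\alpha(\sigma,\tau)$ is the \v{C}ech coboundary of the indicator function of $C$, while $\partial_{m,\varrho}$ is the weight-$m$ lift of $1\in\kk$ along $\oo_X(D_\varrho)\to T_X$.

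The main obstacle I foresee is this last identification. One must verify that $\partial_{m,\varrho}$ really is a well-defined derivation on every overlap $U_\sigma\cap U_\tau$ that appears in the cocycle (so that the displayed expression is honest), and that the sign convention defining $\alpha$ matches the boundary of the \v{C}ech complex under the isomorphism above. Both points are essentially combinatorial bookkeeping, as is the final dimension count showing that proper components of $\Gamma_\varrho(m)$ in fact give a basis, rather than merely a spanning set, of $H^1(X,\oo_X(D_\varrho))_m$; the spanning statement claimed in the proposition is then immediate.
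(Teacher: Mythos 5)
The overall architecture you propose --- the Euler sequence plus a Demazure-type description of $H^1(X,\oo_X(D_\varrho))_m$ --- is the standard route to this statement (the paper itself does not reprove it but quotes \cite[Thm 6.5]{IV}; the same strategy appears in \cite{Ilt} for surfaces), and the reduction to the summands $H^1(X,\oo_X(D_\varrho))_m$ is correct. However, there is a genuine error in the key combinatorial step: you take $V$ to be the union of the \emph{full closed maximal cones} on which $\chi^m$ fails to extend, and assert both that Demazure's formula gives $H^1(X,\oo_X(D_\varrho))_m\cong\widetilde H^0(V,\kk)$ and that $\pi_0(V)\cong\pi_0(\Gamma_\varrho(m))$. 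Neither holds for that $V$. The correct set (see \cite[Thm 9.1.3]{CLS}) is
\[
 V_{D_\varrho,m}=\bigcup_{\sigma\in\Sigma}\operatorname{Conv}\bigl(u_{\varrho'}\ :\ \varrho'\in\sigma(1),\ \langle m,u_{\varrho'}\rangle<-\delta_{\varrho'\varrho}\bigr),
\]
the union over all cones of the convex hulls of the \emph{bad rays} they contain (equivalently, the sublevel set $\{v\in|\Sigma|:\langle m,v\rangle<\varphi_{D_\varrho}(v)\}$ of the support function). Your ``i.e.'' --- that two bad rays lie in the same component of $V$ exactly when they lie in a common maximal cone --- fails because two bad maximal cones can be glued along a facet containing no bad ray. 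This already breaks the argument for $\F_2$: take rays $u_1=e_1$, $u_2=e_2$, $u_3=-e_1+2e_2$, $u_4=-e_2$, $m=(-1,-1)$, $\varrho=\varrho_2$. Then $\Gamma_{\varrho_2}(m)=\{\varrho_1,\varrho_3\}$ has two components and $H^1(\F_2,T_X)_m\cong\kk$, but every maximal cone contains $\varrho_1$ or $\varrho_3$, so your $V$ is all of $N_\Q$ and $\widetilde H^0(V,\kk)=0$. With $V_{D_\varrho,m}$ in place of $V$ the bijection with $\pi_0(\Gamma_\varrho(m))$ is correct (a set of bad rays spans a cell of $V_{D_\varrho,m}$ iff they lie in a common cone), and the rest of your argument goes through.

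Two smaller points. First, the vanishing in the cases $m(\varrho)\geq 0$ and $m(\varrho)\leq -2$ needs neither contractibility nor a retraction onto the star of $\varrho$: in both cases the badness condition at $\varrho$ for $D_\varrho$ agrees with the one for the trivial divisor, so $V_{D_\varrho,m}=V_{0,m}$, and the latter is connected (or empty) because it computes $H^1(X,\oo_X)_m=0$. Second, the classes $\xi(m,\varrho,C)$ for fixed $(m,\varrho)$ do \emph{not} form a basis of $H^1(X,\oo_X(D_\varrho))_m$: the sum over all components of $\Gamma_\varrho(m)$ is the coboundary of the constant function $1$, so $k$ components span a space of dimension $k-1$. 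This costs nothing here, since the proposition only claims that the $\xi(m,\varrho,C)$ span.
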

%\begin{proof}
%It is enough to show that for any pair of maximal
%cones $\sigma,\tau$ the function
%$\alpha(\sigma,\tau)\cdot\partial_{m,\varrho}$
%maps $\mathcal O_X(X_{\sigma\cap\tau})$ to itself.
%As a consequence~\eqref{co} defines an element
%of $H^1(X,T_M)_m$, as stated.
%Since $X$ is smooth we can assume $\sigma$ 
%to be the positive orthant and, after possibly
%relabelling the indices, that  $\sigma\cap\tau
%= {\rm cone}(e_1,\dots,e_k)$.
%Thus
%\[
% (\sigma\cap\tau)^\vee 
% = 
% {\rm cone}(e_1^*,\dots,e_{k}^*,\pm e_{k+1}^*,\dots,\pm e_n^*).
%\]
%By definition, if the coefficient $\alpha(\sigma,\tau)$ 
%is non-zero, then $m$ is negative on some of the 
%$e_i$ with $i>k$. If, in addition,
%we have $m(e_i)<0$ for some $i\leq k$, 
%the connectedness of $C$ implies $e_i = \varrho$.
%Thus all but at most one of the coefficients of $m$ 
%with respect to $e_1^*,\dots,e_{k}^*$ are non-negative
%and the possible negative one is $-1$ for the index
%$i$ such that $e_i = \varrho$.
%In particular either $u+m\in (\sigma\cap\tau)^\vee$
%or $u(\varrho)=0$, which proves the statement.
%\end{proof}

\section{Deformations of smooth toric varieties}
\label{sec:2}

In what follows, $X$ is a smooth toric variety.
Our aim now is to show how to associate to any admissible triple
a one parameter deformation $\pi\colon\mathcal X\to\mathbb A^1$
such that the image of the Kodaira-Spencer map 
$T_{\mathbb A^1}\to H^1(X,T_X)$ associated
to $\pi$ is the original admissible triple.

\subsection{The deformation space}
\label{P-matrix}
Let $X$ be a smooth toric variety and
let $(m,\varrho,C)$ be an admissible triple.
The element $m\in{\rm Hom}(N,\zz)$ is
a homomorphism $N\to\zz$ with kernel $K$.
We let $\gamma\colon\zz\to N$ be the section of 
$m$ defined by $\gamma(-1)=v_\varrho$ and let $\pi\colon
N\to K$ be the corresponding projection.
The above maps are encoded in the following
exact sequence:
\begin{equation}
\label{sequence}
 \xymatrix{
  0\ar[r] & K\ar[r] & N\ar[r]^-m\ar@/^5pt/[l]^-{\pi} 
  &\zz\ar[r]\ar@/^5pt/[l]^-{\gamma} & 0.
 }
\end{equation}

 Let $\tilde N=\zz^2\oplus K\oplus \zz$
 and $\tilde M$ its dual. We define the map
\begin{equation}
\label{emb}
 \imath\colon N\to\ \tilde N
 \qquad
 v\mapsto [m(v),m(v),\pi(v),0].
\end{equation}

Let $\varrho_1,\dots,\varrho_r$ be the primitive generators 
of the one-dimensional cones of the fan $\Sigma$,
let $a_i = m(\varrho_i)$ for any $i$ and let 
\begin{align*}
 U_1 & = \{(1,i) \, : \, a_i > 0\} &
 U_2 & = \{(2,i) \, : \, a_i < 0\text{ and } \varrho_i\in C\cup\{\varrho\}\} \\
 U_4 & = \{(4,i) \, : \, a_i = 0\} &
 U_3 & = \{(3,i) \, : \, a_i < 0\text{ and } \varrho_i\notin C\}
\end{align*}
and let $U$ be the union $U_1\cup U_2\cup U_3\cup U_4$.
For any $(j,i)\in U$ we define the row vector
$v_j = [a_i \, :\, (j,i)\in U_j]$.
Define the matrix $A_j=[\pi(\varrho_i) \, :\, (j,i)\in U_j]$
whose columns are the vectors $\pi(\varrho_j)$.
Finally we define the following block matrix
\begin{equation}
\label{Pmat}
 P(m,\varrho,C)
 :=
 \left[\begin{array}{ccccc}
 1 & v_1 & v_2 & 0 & 0\\
 1 & v_1 &  0 & v_3 & 0\\
  0 & A_1 & A_2 & A_3 & A_4\\
  1 & 0 & 0 & 0 & 0
 \end{array} \right],
\end{equation}
where a $0$ represents a zero matrix of
adequate dimensions, whereas a $1$ is simply
the number one ($1\times 1$ matrix).

From here on, given a one dimensional
ray $\varrho_s$ we denote by $i(\varrho_s)$ 
its index $s$ and by $k(\varrho_s)\in\{1,2,3,4\}$
the index of the set $U_k$ corresponding
to the sign of $a_s$.
Given a maximal cone $\sigma\in\Sigma_X$ 
we define the cone indices of $\tilde\sigma$,
such that the cones $\{\tilde\sigma\}_{\sigma\in\Sigma}$
define the ambient toric variety $\tilde X$ 
where $\mathcal X$ is embedded.
For every
$\varrho_i\in\sigma(1)$ 
we add every possible $(k,i)\in U$ as a cone index for
$\tilde\sigma$. We also add,
if it is not added already,
the index $(2,i(\varrho))$
if $\sigma(1)\cap C=\emptyset$
or the index $(3,i(\varrho))$ if
$\sigma(1)\cap C\neq\emptyset$.
Lastly, we always add 1 as an index
for $\tilde\sigma$.
All this can be summarized as follows:

\begin{itemize}
\item
if $\sigma(1)\cap C$ is empty then the cone indices
for $\tilde\sigma$ are:
\begin{align*}
\{1,(2,i(\varrho))\}
\cup \{(1,s)\, : \, \varrho_s\in\sigma(1)\text{ y }a_s>0\}\\
\cup \{(4,s)\, : \, \varrho_s\in\sigma(1)\text{ y }a_s=0\}
\cup \{(3,s)\, : \, \varrho_s\in\sigma(1)\text{ y }a_s<0\}
\end{align*}
\item
if $\sigma(1)\cap C$ is non-empty then the cone indices
for $\tilde\sigma$ are:
\begin{align*}
\{1,(3,i(\varrho))\}
\cup \{(1,s)\, : \, \varrho_s\in\sigma(1)\text{ y }a_s>0\}\\
\cup \{(4,s)\, : \, \varrho_s\in\sigma(1)\text{ y }a_s=0\}
\cup \{(2,s)\, : \, \varrho_s\in\sigma(1)\text{ y }a_s<0\}.
\end{align*}
\end{itemize}

Denote by $\tilde X$ the toric variety
whose fan $\Sigma_{\tilde X}$,
defined on $\tilde N$,
is given by the cones $\tilde\sigma$
for every $\sigma\in\Sigma_X$.
We then define
$\mathcal X = \mathcal X(m,\varrho,C)$
as the  $T$-variety of
complexity one embedded in $\tilde X$
whose equation in Cox coordinates
$T_1$, $T_{ij}$ with $(i,j)\in U$,
is the following trinomial
\begin{equation}\label{trinomial}
 T_1\prod_{(1,j)\in U_1}T_{1j}^{a_{j}}
 -
 \prod_{(2,j)\in U_2}T_{2j}^{-a_{j}}
 +
 \prod_{(3,j)\in U_3}T_{3j}^{-a_{j}}.
\end{equation}
We denote by $\bar{\mathcal X}$ the 
affine subvariety defined by the above 
trinomial equation in Cox coordinates.

\begin{thm}
\label{one-param}
Let $(m,\varrho,C)$ be an admissible triple and let 
$\mathcal X = \mathcal X(m,\varrho,C)$ be the 
$T$-variety of Construction~\ref{P-matrix}. 
The inclusion $\mathbb K[T_1]\to\mathbb K[\bar{\mathcal X}]$
defines a $T$-equivariant morphism 
$\pi\colon \mathcal X\to\mathbb A^1$
which is a one-parameter deformation of $X$,
i.e. $X$ is isomorphic to the fiber of $\pi$ over 
$0\in\mathbb A^1$. 
\end{thm}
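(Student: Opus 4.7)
The plan is to verify the three conditions defining a one-parameter deformation: $\pi$ is a well-defined $T$-equivariant morphism, it is flat, and the scheme-theoretic fiber over $0\in\A^1$ is isomorphic to $X$. I would first check that the trinomial~(\ref{trinomial}) is homogeneous in the $\Cl(\tilde X)$-grading of the Cox ring, so that $\mathcal X$ is a well-defined subvariety of $\tilde X$. The class of $T_1$ in $\Cl(\tilde X)$ is trivial, as witnessed by $e_4^*\in\tilde M$, which evaluates to $1$ on the first column $[1,1,0,1]^T$ of $P$ and to $0$ on every other column; hence $T_1$ descends to a regular function on $\tilde X$ and its restriction to $\mathcal X$ is the morphism $\pi$. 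With $T=N\otimes\kk^*$ acting on $\tilde X$ via $\imath$ and trivially on $\A^1$, $T$-equivariance follows from $\imath^*e_4^*=0\in M$, because $\imath(v)=(m(v),m(v),\pi(v),0)$ has vanishing fourth coordinate. For flatness, view the trinomial as linear in $T_1$ with leading coefficient $\prod T_{1j}^{a_j}$ (a monomial in the $T_{1j}$'s) and constant term involving only the disjoint variables $T_{2j},T_{3j}$; these are coprime, so the trinomial is irreducible, $\mathcal X$ is integral, and the dominant morphism $\pi$ to the smooth curve $\A^1$ is automatically flat.

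The central step is identifying the scheme-theoretic fiber $\pi^{-1}(0)$ with $X$. Inside $\tilde X$ it is cut out by $T_1=0$ together with the residual binomial $g=\prod_{(2,j)\in U_2}T_{2j}^{-a_j}-\prod_{(3,j)\in U_3}T_{3j}^{-a_j}$, which is irreducible because its exponent vector is primitive (the variable $T_{2,i(\varrho)}$ enters with exponent $-a_{i(\varrho)}=1$). The hypersurface $V(T_1)\subseteq\tilde X$ is the orbit closure attached to the ray $\varrho_0=[1,1,0,1]^T$; since every maximal cone $\tilde\sigma\in\Sigma_{\tilde X}$ contains $\varrho_0$ by construction, the fan of $V(T_1)$ is the star of $\varrho_0$ in $\tilde N/\zz\varrho_0\cong\zz^2\oplus K$. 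Further cutting by $g$ corresponds on the lattice side to intersecting with the diagonal sublattice $\{(x,x,k)\}\cong\zz\oplus K$, which via the splitting~(\ref{sequence}) is canonically identified with $N$ so that the composition $N\stackrel{\imath}{\to}\tilde N\to\tilde N/\zz\varrho_0$ factors through the diagonal and sends $\varrho_s\mapsto(m(\varrho_s),m(\varrho_s),\pi(\varrho_s))$. The key geometric point is then that, for each $\sigma\in\Sigma$, the intersection of $\tilde\sigma$ with the diagonal equals $\sigma$: columns of types $(1,s)$ and $(4,s)$ already lie on the diagonal and project to $\varrho_s$, while columns of types $(2,s)$ and $(3,s)$ must be combined with the ``extra'' index $(3,i(\varrho))$ or $(2,i(\varrho))$ prescribed by Construction~\ref{P-matrix} in order to reach the diagonal, the combination again projecting to $\varrho_s$ (using $\pi(\varrho)=0$).

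The main obstacle is this last combinatorial matching. The case distinction in Construction~\ref{P-matrix} on whether $\sigma(1)\cap C$ is empty is exactly what makes every such pairing available, and a careful case analysis (split by the sign of $a_s$ and by whether $\varrho\in\sigma(1)$) shows that each $\tilde\sigma$ restricts to $\sigma$ with correct primitive generators and correct face relations. The primitivity of the exponent vector of $g$ also ensures that the scheme-theoretic intersection $V(T_1)\cap V(g)$ is reduced, completing the identification $\pi^{-1}(0)\cong X$.
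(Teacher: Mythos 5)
Your proposal is correct and, at its core, follows the same route as the paper: realize $\pi$ via the character $e^*_{n+2}$ (the paper's ``projection onto the last coordinate''), and identify the central fiber by showing that each $\tilde\sigma$ meets the embedded copy of $N$ exactly in $\sigma$, using the same key mechanism --- columns of type $(1,s)$ and $(4,s)$ are already $\imath(\varrho_s)$, while a type $(2,s)$ or $(3,s)$ column must be paired with the extra index $(3,i(\varrho))$ or $(2,i(\varrho))$ that Construction~\ref{P-matrix} inserts into $\tilde\sigma$. Your presentation of the fiber as ``the star of the ray $[1,1,0,1]^T$ cut by the binomial $g$, i.e.\ intersected with the diagonal sublattice'' is only cosmetically different from the paper's direct description $N_0=u^\perp\cap(e^*_{n+2})^\perp$ with fan $\Sigma_{\tilde X}\cap N_0$; both reduce to the same lattice computation. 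The one substantive difference is that you explicitly verify flatness (irreducibility of the trinomial, hence integrality of $\mathcal X$ and flatness of a dominant map to a smooth curve) and reducedness of the central fiber (primitivity of the exponent vector of $g$, forced by $a_{i(\varrho)}=-1$); the paper's proof is silent on both points, so these additions genuinely complete the argument rather than duplicate it. The only place where you defer work is the reverse inclusion $\tilde\sigma\cap N_0\subseteq\imath(\sigma)$, which you relegate to ``a careful case analysis''; the paper instead deduces it from completeness of the two fans once $\imath(\sigma)\subseteq\tilde\sigma\cap N_0$ is known, which is a cleaner way to close that gap.
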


\begin{proof}
To see that $\pi$ is indeed a morphism, recall that
$\mathcal{X}$ is embedded in a toric variety
$\tilde X$ whose
toric fan $\Sigma_{\tilde X}$ has the columns of
$P$ as ray generators and maximal cones given by
$\{\tilde\sigma\}_{\sigma\in\Sigma(n)}$.
By taking the projection
onto the last coordinate, we map every ray of
$\Sigma_{\tilde X}$ to 0, except for the one
corresponding to $T_1$, which is mapped onto $\Q_{\ge 0}$. Thus, we
have a morphism of toric varieties 
\[
 \xx\to\mathbb{A}^1.
\]
Let $\xx_0$ be the fiber of $\pi$
resulting by setting $T_1=0$.
The trinomial in $\eqref{trinomial}$
becomes a binomial
$\chi^{v_1}-\chi^{v_2}$, with $v_1,v_2\in\tilde N$.
Let $v=v_1-v_2$ and let $u\in\tilde M$
be such that $P^*(u)=v$.
Now, 
$\xx_0$ admits an action of the subtorus
defined by $u^\bot$.
Recall that $T_1=0$, so for the action
to be effective we must
take the subtorus $N_0:=u^\bot\cap (e^*_{n+2})^\bot$.
Since $N_0$  has the same dimension as $\xx_0$,
this fiber can be seen as a toric variety having
$$\Sigma_{\xx_0}:=\Sigma_{\tilde X}
\cap N_0$$
as fan.
It can be shown that $\imath(N)=N_0$: Indeed,
a vector $[a,b]\oplus w \oplus [d]\in\tilde N$
belongs to $N_0$ if and only if
$a=b$ and $d=0$, so it is clear that
$\imath(N)\subseteq N_0$. Conversely,
if the vetor is of the form 
$[a,a]\oplus w \oplus [0]\in\tilde N$,
then it is equal to $\imath\left(w
+\gamma(a)\right)$.
We now wish to prove that the following
equality holds
\[
 \Sigma_X=\Sigma_{\xx_0}.
\]
Take a cone $\sigma\in\Sigma_X$
and a ray $\tau\in\sigma(1)$
and let $v_\tau$ be its primitive generator.
If $\tau\in U_1$ or $\tau\in U_4$,
then $\imath(v)\in\tilde\sigma$
because it is a column of $P(m,\varrho,C)$.
Otherwise, $\imath(v)$ is
a linear combination of columns
of $P(m,\varrho,C)$, one
of index $(j_1,i(\tau))$
and one of index $(j_2,i(\varrho))$
with $\{j_1,j_2\}=\{2,3\}$,
thus we still
have $\imath(v)\in\tilde\sigma$
 in this case. We conclude
 that $\imath(v)\in\tilde\sigma\cap\imath(N)=
 \tilde\sigma\cap N_0$.
 Due to the completeness of the
 fans, the fact that $\imath(\sigma)
 \subset \tilde\sigma\cap N_0$ implies
 that $\Sigma_X=\Sigma_{\xx_0}$
as claimed.
\end{proof}

\subsection{The central fiber}\label{centralfiber}
We now describe the embedding $X\to\mathcal X$
at the level of Cox rings. We define the following 
homomorphism of polynomial rings

{\small
\[
 \eta\colon
 \mathbb K[T_{ij} \, :\, (i,j)\in U]
 \to
 \mathbb K[S_1,\dots,S_r]
 \qquad
 T_{ij}\mapsto
 \begin{cases}
 \prod_{(3,j)\in U_2}S_j^{-a_j} & \text{if $i=2$ and $\varrho_j=\varrho$}\\
 \prod_{(2,j)\in U_3}S_j^{-a_j} & \text{if $i=3$ and $\varrho_j=\varrho$}\\
 S_j & \text{otherwise}
 \end{cases}
\]
}
And $T_1\mapsto 0$.
Observe that the variable $T_1$ is the variable which gives the coordinate on the base
$\mathbb A^1$ of the deformation.

\begin{prop}\label{fiberthm}
The homomorphism of polynomial rings $\eta$
induces an isomorphism $\eta'\colon
\mathbb K[\bar{\mathcal X}]/\langle T_1\rangle
\to \rr(X)$ which induces the inclusion $X\to\mathcal X$
in Cox coordinates.
\end{prop}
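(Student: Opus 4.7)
The plan is to verify that $\eta$ descends to a well-defined map on $\kk[\bar{\xx}]/\langle T_1\rangle$, to exhibit an explicit two-sided inverse, and finally to match $\eta'$ with the fan-theoretic identification of the central fiber from Theorem~\ref{one-param}. For well-definedness it is enough, since $\eta(T_1)=0$, to show that $\eta$ annihilates the binomial $\prod_{(3,j)\in U_3} T_{3j}^{-a_{j}}-\prod_{(2,j)\in U_2} T_{2j}^{-a_{j}}$ obtained from the trinomial~\eqref{trinomial} modulo $T_1$. Because $-a_{i(\varrho)}=1$, each product contains exactly one linear factor among $T_{2,i(\varrho)}$ and $T_{3,i(\varrho)}$; replacing these two variables by their prescribed images under $\eta$ (namely, the full product of $S_j^{-a_{j}}$ over the opposite set of indices) and collecting, both monomials reduce to the same expression $\prod_{a_j<0}S_j^{-a_{j}}$, so the binomial maps to zero.

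To prove bijectivity of $\eta'$, I would construct an explicit inverse $\nu\colon\rr(X)\to\kk[\bar{\xx}]/\langle T_1\rangle$. For each $j$ with $\varrho_j\neq\varrho$ there is exactly one $(i,j)\in U$ landing in the ``otherwise'' case of $\eta$, and I would set $\nu(S_j)=T_{ij}$. For the remaining variable $S_{i(\varrho)}$, the binomial relation in the quotient together with $-a_{i(\varrho)}=1$ lets one solve for an element whose image under $\eta$ is precisely $S_{i(\varrho)}$. Verifying $\nu\circ\eta'=\mathrm{id}$ and $\eta'\circ\nu=\mathrm{id}$ on generators then reduces, in each case, to the same monomial identity already used for well-definedness.

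Finally, to identify $\eta'$ with the inclusion $X\to\xx$ in Cox coordinates, I would invoke the fan computation from the proof of Theorem~\ref{one-param}: the rays of $\Sigma_{\tilde X}$ sitting in $N_0=\imath(N)$ recover exactly the rays of $\Sigma_X$, with the two distinguished columns of $P(m,\varrho,C)$ indexed by $(2,i(\varrho))$ and $(3,i(\varrho))$ both projecting onto the single ray $\varrho\in\Sigma_X(1)$. The map $\eta'$ realizes this collapse at the level of Cox coordinates, and the scheme-theoretic embedding of $X$ as the fiber over $0\in\A^1$ is then precisely the quotient of $\bar{\xx}$ by $\langle T_1\rangle$.

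The main obstacle will be the bijectivity step, specifically showing that $S_{i(\varrho)}$ lies in the image of $\eta'$: its preimage is not any single $T_{ij}$, but must be extracted from the two ``doubled'' Cox coordinates $T_{2,i(\varrho)}$ and $T_{3,i(\varrho)}$ through the binomial relation. The combinatorial bookkeeping between $U_2$ and $U_3$, and the treatment of the special index $i(\varrho)$ shared between them, will require careful case analysis and is where the hypothesis $m(\varrho)=-1$ (so that $-a_{i(\varrho)}=1$) is essential.
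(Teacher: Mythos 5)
Your first step is correct and coincides with the paper's: after the substitutions $\eta(T_{2,i(\varrho)})=\prod_{(3,j)\in U_3}S_j^{-a_j}$ and $\eta(T_{3,i(\varrho)})=\prod_{(2,j)\in U_2}S_j^{-a_j}$, both monomials of the binomial map to $\prod_{a_j<0}S_j^{-a_j}$, so the binomial lies in $\ker\eta$. The fatal problem is the step you yourself single out as the main obstacle: it cannot be carried out, because $\eta'$ is not surjective onto $\kk[S_1,\dots,S_r]$. The image of $\eta$ is the monomial subalgebra generated by the $S_j$ with $j\neq i(\varrho)$ together with the two monomials $\prod_{(2,j)\in U_2}S_j^{-a_j}$ and $\prod_{(3,j)\in U_3}S_j^{-a_j}$. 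Since $C$ and $\Gamma_\varrho(m)\setminus C$ are both non-empty, each of these two monomials contains $S_{i(\varrho)}$ (with exponent $1$) only jointly with at least one other variable; counting the $S_{i(\varrho)}$-degree shows that $S_{i(\varrho)}$ itself never lies in this subalgebra, so no element of $\kk[\bar{\xx}]/\langle T_1\rangle$ maps to it. The binomial relation identifies $T_{2,i(\varrho)}\prod_{(2,j)\in U_2,\,j\neq i(\varrho)}T_{2j}^{-a_j}$ with $T_{3,i(\varrho)}\prod_{(3,j)\in U_3,\,j\neq i(\varrho)}T_{3j}^{-a_j}$, but you cannot ``solve'' for a preimage of $S_{i(\varrho)}$: the quotient ring is not a UFD and division is not available. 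Concretely, for $X=\mathbb F_2$ with $m=(-1,-1)$, $\varrho=\rho_2$, $C=\{\rho_1\}$ (the paper's own Hirzebruch example with $n=2$, $\alpha=1$) one finds
\[
\kk[\bar{\xx}]/\langle T_1\rangle\;\cong\;\kk[T_{14}]\otimes_\kk\kk[x,y,z,w]/(xy-zw),
\]
which is singular along a line and hence not isomorphic to the polynomial ring $\rr(\mathbb F_2)$. In fact both monomials of the binomial always have degree at least two, so $\kk[T_{ij}]/\langle\bar g\rangle$ is always singular at the origin and the two-sided inverse $\nu$ you propose never exists.

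What the paper's proof actually establishes is correspondingly weaker, and that is what you should aim for: the kernel of $\eta$ is a height-one prime in a UFD containing the irreducible binomial, hence equals the principal ideal it generates, so $\eta'$ is injective, i.e.\ an isomorphism onto the monomial subalgebra ${\rm im}(\eta)$. The geometric content --- that $\eta'$ induces the closed embedding $X\to\xx$ as the central fiber --- is then obtained not from surjectivity but from the lattice map $\psi$ of \eqref{matrixnu} and the commutative diagram \eqref{comdiag}, which show that $\psi$ is compatible with the $P$-matrices of the two Cox constructions, carries the positive orthant into the positive orthant and cones into cones; this identifies the quotients without ever needing $\eta'$ to be onto. (That $\eta$ is not surjective is in fact essential later in the paper, where the liftable monomials are exactly those in ${\rm im}(\nu_+)$, a proper subsemigroup in general.) Your third paragraph, invoking the fan computation of Theorem~\ref{one-param}, is a reasonable substitute for this last part, but the middle of your argument rests on a surjectivity that fails, so the proposed inverse-construction route must be abandoned rather than merely worked out more carefully.
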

\begin{proof}
First of all we observe that the binomal 
$\prod_{(2,j)\in S_2}T_{2j}^{-a_{j}}-
\prod_{(3,j)\in S_3}T_{3j}^{-a_{j}}$ is contained
in the kernel of $\eta$. Moreover since the kernel 
is a prime principal ideal we conclude that it is
generated by the above binomial. Thus, after 
identifying $\mathbb K[T_{ij} \, :\, (i,j)\in U]$ with
$\mathbb K[\bar{\mathcal X}]/\langle T_1\rangle$, the homomorphism
$\eta$ induces an isomorphism 
$\eta'\colon\mathbb K[\bar{\mathcal X}]/\langle T_1\rangle\to \mathcal{R}(X)$
as claimed. Observe that $\eta'$ is a graded 
map with respect to the $\Cl(\mathcal X)$-grading
on the domain and the $\Cl(X)$-grading on the 
codomain. 
Denote by 
\[
 \tilde{P}
 :=
 \left[\begin{array}{cccc}
 v_1 & v_2 & 0 & 0\\
 v_1 &  0 & v_3 & 0\\
 A_1 & A_2 & A_3 & A_4
 \end{array} \right].
\]
the matrix obtained by removing the first column 
and the first row from $P(m,\varrho,C)$.
Define the homomorphism 
\begin{equation}\label{matrixnu}
 \psi\colon
 \zz^r\to\zz^{r+1}
 \qquad
 e_j\mapsto
 \begin{cases}
  e_{(1,j)}  & \text{if $a_j>0$}\\
  e_{(2,j)} - a_j e_{(3,\varrho)}
  & \text{if $\varrho_j\in C\cup\{\varrho\}$}\\
  e_{(3,j)} - a_j e_{(2,\varrho)}
  & \text{if $\varrho_j\in (\Gamma_\varrho(m)\setminus C)\cup\{\varrho\}$}\\
  e_{(4,j)}  & \text{if $a_j=0$}
 \end{cases}.
\end{equation}
Observe that $\psi$ fits in the following 
commutative diagram
\begin{equation}\label{comdiag}
 \xymatrix@C=5cm{
  \zz^r\ar[r]^{-\psi}\ar[d]^-{P_X} & \zz^{r+1}\ar[d]^-{\tilde P}\\
  \zz^n\ar[r]^-{(a_1,\dots,a_n)\mapsto (-a_n,-a_n,a_1,\dots, a_{n-1})} & \zz^{n+1}
 }
\end{equation}
where $P_X$ is the $P$-matrix of the Cox construction
of $X$. Moreover $\psi$ maps the positive orthant 
of $\zz^r$ into the positive orthant of $\zz^{r+1}$,
it maps cones of $X$ into cones of $\mathcal X$
and it induces $\eta'$. The statement follows.
\end{proof}

\subsection{The Kodaira Spencer map}

\begin{thm}
\label{ks}
Let $(m,\varrho,C)$ be an admissible triple and let 
$\pi\colon \mathcal X \to\mathbb A^1$ be the 
corresponding one-parameter family.
The image of $\pi$ via the 
Kodaira-Spencer map is the cocycle 
$\xi(m,\varrho,C)\in H^1(X,T_X)_m$
defined in~\eqref{co}.
\end{thm}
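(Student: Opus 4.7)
I plan to compute $\kappa(\pi)$ directly via Čech cohomology. By Theorem~\ref{kscorr}, pulling $\pi$ back along $\mathrm{Spec}(\kk[t]/\langle t^2\rangle) \hookrightarrow \A^1$ gives a first-order deformation $\mathcal X_\epsilon$ of $X$ whose class in $H^1(X,T_X)$ is represented, on the affine cover $\{U_\sigma\}_{\sigma \in \Sigma(n)}$, by the 1-cocycle of transitions between local trivializations $\phi_\sigma\colon U_\sigma \otimes \kk[t]/\langle t^2\rangle \xrightarrow{\sim} \mathcal X_\epsilon|_{U_\sigma}$. The strategy is to build each $\phi_\sigma$ explicitly from the trinomial~\eqref{trinomial} and then recognize the resulting transition derivations as $\alpha(\sigma,\tau)\cdot\partial_{m,\varrho}$. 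Write $M_1$, $M_2$, $M_3$ for the three monomials $\prod T_{1j}^{a_j}$, $\prod T_{2j}^{-a_j}$, $\prod T_{3j}^{-a_j}$ appearing in~\eqref{trinomial}.

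For the trivializations, Construction~\ref{P-matrix} forces exactly one of $T_{(2,i(\varrho))}$ or $T_{(3,i(\varrho))}$ to be a coordinate on $U_{\tilde\sigma}$ — the former when $\sigma(1)\cap C = \emptyset$, the latter otherwise — while the remaining Cox variables in the corresponding monomial are invertible there. Reducing~\eqref{trinomial} modulo $T_1^2$ makes it linear in this distinguished variable, and solving expresses it as its central-fibre value (identified via $\eta'$ of Proposition~\ref{fiberthm}) plus a term $T_1\,F_\sigma$ with $F_\sigma = M_1/M_k'$ ($k=2$ or $3$; here $M_k'$ denotes $M_k$ with the distinguished variable removed). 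This specifies $\phi_\sigma$.

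I then compare $\phi_\sigma$ and $\phi_\tau$ on $U_{\sigma\cap\tau}$. If $\sigma(1)\cap C$ and $\tau(1)\cap C$ have the same emptiness status, the same variable is distinguished on both sides and the two lifts agree modulo $T_1^2$, so the transition derivation vanishes — in agreement with $\alpha(\sigma,\tau) = 0$. Otherwise, say $\sigma(1)\cap C = \emptyset$ and $\tau(1)\cap C\neq\emptyset$, a direct calculation gives $\phi_\sigma\circ\phi_\tau^{-1} = \mathrm{id} + T_1\,D_{\sigma,\tau}$, where the derivation $D_{\sigma,\tau}$ sends $T_{(k,i(\varrho))} \mapsto M_1/M_k'$ (for $k=2,3$) and annihilates the other Cox variables.

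The main technical obstacle is matching $D_{\sigma,\tau}$ with $\pm\partial_{m,\varrho}$ at the level of characters $\chi^u$. From the formulas $\eta(M_1) = \prod_{a_j>0}S_j^{a_j}$ and $\eta(M_2) = \eta(M_3) = \prod_{a_j<0}S_j^{-a_j}$ one obtains $\eta(M_1)/\eta(M_k) = \chi^m$ on $X$. Lifting $\chi^u$ to a Cox monomial on $\tilde X$ using $T_{(3,i(\varrho))}^{u(\varrho)}$ and canonical lifts for the other $S_j$ (so that the $\varrho$-weight is carried entirely by the exponent of $T_{(3,i(\varrho))}$, using $a_\varrho = -1$), the derivation rule then yields $D_{\sigma,\tau}(\chi^u) = u(\varrho)\,\chi^{u+m} = \partial_{m,\varrho}(\chi^u)$. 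The sign $\alpha(\sigma,\tau) = \pm 1$ falls out once the Čech ordering convention is fixed together with the sign pattern in~\eqref{trinomial}. Assembling these local pieces produces the Čech cocycle $\{\alpha(\sigma,\tau)\,\partial_{m,\varrho}\} = \xi(m,\varrho,C)$, as required.
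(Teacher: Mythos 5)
Your proposal is correct and follows essentially the same route as the paper: both compute the \v{C}ech cocycle by solving the trinomial modulo $T_1^2$ for the distinguished variable $T_{(2,i(\varrho))}$ or $T_{(3,i(\varrho))}$ (according to whether $\sigma(1)$ meets $C$), and both identify the resulting transition derivation with $\chi^u\mapsto u(\varrho)\chi^{u+m}$ via the observation that $M_1/M_k$ descends to $\chi^m$. The only imprecision is the claim that exactly one of $T_{(2,i(\varrho))}$, $T_{(3,i(\varrho))}$ is a coordinate on $U_{\tilde\sigma}$ (when $\varrho\in\sigma(1)$ both are); what actually singles out the variable to solve for is, as you also note, the invertibility of the complementary factor $M_k'$ on the chart.
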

\begin{proof}
The complexity one variety $\mathcal X$ is
canonically embedded into the toric variety 
$\tilde X$ and the morphism $\pi\colon\mathcal X\to\mathbb A^1$ 
is induced by a toric morphism $\tilde X\to\mathbb A^1$.
Let $\Sigma$ be the fan of the toric variety $X$.
We denote by $\tilde\Sigma\subseteq\tilde N_\qq$ 
the fan of $\tilde Z$.
Given a cone $\sigma\in\Sigma$ we denote by 
$\tilde\sigma$ the corresponding cone of $\tilde\Sigma$,
that is $\tilde\sigma\cap\imath(N) = \sigma$,
where the map $\imath$ is the one defined in~\eqref{emb}.
Let $\tilde M$ be the dual of $\tilde N$.
The trinomial~\eqref{trinomial} is locally
described in $\mathbb K[\tilde\sigma^\vee\cap\tilde M]$
by a polynomial of the form
\[
 \chi^{u_4+u_1}-\chi^{u_2}+\chi^{u_3},
\]
where $u_4 = [0,\dots,0,1]$, so that 
$\varepsilon = \chi^{u_4}$. We denote by
$\varrho_\sigma$ the primitive generator 
of the extremal ray of the cone $\tilde\sigma$ 
which is one of the column of the $P$-matrix~\eqref{Pmat}
whose index is $(2,i(\varrho))$ if 
$\sigma\cap C$ is non-empty and 
it is $(3,i(\varrho))$ otherwise.
Assume we are in the first case then
the following equation holds
\[
 P^*(u_2) = v_2,
\]
where $T^{v_2}$ is the monomial in Cox coordinates
which corresponds to the character $\chi^{u_2}$.
The monomial $T^{v_2}$ does not contain any
variable $T_{(k,i)}$ such that $\varrho_i\in\sigma(1)$
with the only exception of the variable $T_{(2,i(\varrho))}$
which appears with exponent $1$. This implies
that $u_2$ has scalar product $0$ with each
column of the $P$-matrix of index 
$(k,i)$ when $\varrho_i\in\sigma(1)\setminus\{\varrho\}$
and it has scalar product $1$ with the column
of index $(2,i(\varrho))$. In particular $u_2$ generates
an extremal ray of the smooth cone $\tilde\sigma$ and then
$\chi^{u_2}$ is a variable of the polynomial ring
$\mathbb K[\tilde\sigma^\vee\cap\tilde M]$.
Analogously, if $\sigma\cap C$ is empty,
the character $\chi^{u_3}$ is a variable
of the ring.
Both cases establish
an isomorphism
\[
 \dfrac{\kk[\tilde\sigma^\vee\cap \tilde M]}
 {\langle\chi^{u_4+u_1}-\chi^{u_2}+\chi^{u_3}\rangle}
 \to
 \kk[\tilde\sigma^\vee\cap
 \tilde M\cap\varrho_\sigma^\bot].
\]
For the rest of this proof, we fix two cones $\sigma,\tau
\in\Sigma$.
The isomorphism above
leads to the following diagram

\[
 \xymatrix{
  \dfrac{\kk[(\tilde\sigma\cap\tilde\tau)^\vee\cap 
  \tilde M]}
  {\langle\chi^{2u_4},\chi^{u_4+u_1}-\chi^{u_2}+\chi^{u_3}\rangle}\ar[d]^-{\simeq}
  \ar@{=}[r] &
  \dfrac{\kk[(\tilde\sigma\cap\tilde\tau)^\vee\cap 
  \tilde M]}
  {\langle\chi^{2u_4},\chi^{u_4+u_1'}-\chi^{u_2'}+\chi^{u_3'}\rangle}\ar[d]^-{\simeq}\\
  \dfrac{\kk[(\tilde\sigma\cap\tilde\tau)^\vee\cap \tilde M\cap\varrho_\sigma^\bot]}
  {\langle\chi^{2u_4}\rangle}\ar[d]^-{\simeq}_\beta
  &
  \dfrac{\kk[(\tilde\sigma\cap\tilde\tau)^\vee\cap \tilde M\cap\varrho_\tau^\bot]}
  {\langle\chi^{2u_4}\rangle}\ar[d]^-{\simeq}\\
  \kk[(\sigma\cap\tau)^\vee\cap M]\otimes_\kk\kk[\varepsilon]
 &
  \kk[(\sigma\cap\tau)^\vee\cap M]\otimes_\kk\kk[\varepsilon]
 }
\]
where the map $\beta$ is defined by the composition
$\beta^*\colon M\cap\varrho_\sigma^\bot \to M
\to M/\langle u\rangle$ of the inclusion with the projection
and observing that $\beta^*(\tilde\sigma)=\sigma$ and 
$\beta^*(\tilde\tau)=\tau$.
Moreover $\beta^*$ is an isomorphism being 
$u(\varrho_\sigma) = \pm 1$. 
We have thus constructed an
isomorphism
\[
\varphi\colon
 \kk[(\sigma\cap\tau)^\vee\cap M]\otimes_\kk\kk[\varepsilon]
 \to
 \kk[(\sigma\cap\tau)^\vee\cap M]\otimes_\kk\kk[\varepsilon].
\]

Let $a\in(\tilde\sigma\cap\tilde\tau)^\vee \cap M$.
We will assume that
$\chi^{u_2}$ is a variable in
$\kk[\tilde\sigma^\vee\cap\tilde M]$
and $\chi^{u_3'}$ is a variable in
$\kk[\tilde\tau^\vee\cap\tilde M]$.
The other cases work similarly so analyzing
only this case is enough.
Since $\tau^\vee$ is smooth, we can
write
$a=v + a(\varrho_\tau) u_3'$
where $v$ is a linear combination of
the rays of $\tau^\vee$ different from $u_3'$.
Hence the following hold
\begin{align*}
 \beta(\chi^a)
 & = \beta(\chi^v\chi^{ a(\varrho_\tau) u_3'})\\
 & = \beta(\chi^v(\varepsilon\chi^{u_1'}+\chi^{u_3'})^n)\\
 & = \chi^{\imath^*(a)}+ a(\varrho_\tau)\varepsilon
        \chi^{\imath^*(a)+\imath^*(u_1'-u_3')}
\end{align*}
where the last equality is due to the fact
that the argument of $\beta$ does not contain $\chi^{u_2'}$.
Now, observe that $\chi^{u_1-u_3}$ is the monomial
$T^w$ where $w$ is the difference between the second and
last row of the matrix $P$. This means $u_1-u_3=
[0,1,0,\ldots,0,-1]$ and therefore $\imath^*(u_1-u_3)=m$.
By setting $u=\imath^*(a)$,
this shows that $\varphi$ is defined as
$$\varphi(\chi^u)=
 a(\varrho_\tau) \chi^{u+m}.$$
The only thing left to prove is that
$a(\varrho_\tau)=
u(\varrho)$. Simply notice that
$a(\varrho_\sigma)=0$, so
$$a(\varrho_\tau)=
a(\varrho_\tau+\varrho_\sigma)=
a\left(\imath(\varrho)\right)
= \imath^*(a)\left(\varrho\right)
=u(\varrho).$$
The coefficient $\alpha(\sigma,\tau)$
from \eqref{co} equals 1 in this case and
is easily seen to appear when checking the
other cases.
\end{proof}

\section{Polyhedral description}
\label{sec:3}

In this section we describe
deformations of toric varieties
as shown in \cite{IV}. Their
results are very closely related to the ones found
in Section \ref{sec:2} of this paper,
but they use a completely different
language. Namely, the language
of polyhedral divisors, which we
summarize here.

\subsection{T-varieties}
Let $X$ be an algebraic variety over $\kk$ having an action of
$T:=(\kk^*)^n$ (which is called the $n$-dimensional torus).
This action is called \textit{effective} if the only $t\in T$,
for which $t\cdot x=x$ holds for all $x\in X$, is the identity of $T$.
A \textit{$T$-variety} is a normal algebraic variety $X$
coming with an effective $(\kk^*)^n$-action.
The \textit{complexity} of $X$ is the difference $\dim X-n$.
These varieties admit a polyhedral description
given by  K. Altmann and J. Hausen for the affine case in
\cite{AH} and later, together with H. Su\ss,
in \cite{AHS} for the non-affine case.
In the following section, we briefly recall this construction.

\subsection{Polyhedral divisors}
Let $N$ be a lattice of rank $n$ and $M=\textrm{Hom}\langle N,\zz\rangle$ its dual. 
We denote by 
$N_\Q:=N\otimes_\zz\Q$ and by $M_\Q:=M\otimes_\zz\Q$
the rational vector spaces.
A \textit{polyhedron} in $N_\Q$ is an intersection of finitely many affine half spaces in $N_\Q$. If we require the supporting hyperplane of any half space to be a linear subspace, the polyhedron is called a \textit{cone}. If $\sigma$ is a cone in $N_\Q$, its \textit{dual cone} is defined as
$$\sigma^\vee:=\{u\in M_\Q: u(v)\ge 0\textrm{ for all }v\in N_\Q\}.$$
Let $\Delta\subseteq N_\Q$ be a polyhedron. The set
$$\sigma:=\{v\in N_\Q:tv+\Delta\subseteq\Delta,\ \forall t\in\Q\}$$
is a cone called the \textit{tailcone} of $\Delta$ and $\Delta$ is called a \textit{$\sigma$-polyhedron}.
Let $Y$ be a normal variety and $\sigma$ a cone. A \textit{polyhedral divisor} on $Y$
is a formal sum
$$\dd:=\sum_P \Delta_P\otimes P,$$
where $P$ runs over all prime divisors of $Y$ and the $\Delta_P$ are all $\sigma$-polyhedrons such that $\Delta_P=\sigma$ for all but finitely many $P$. We admit the empty set as a valid
$\sigma$-polyhedron too.
 Let $\mathfrak{D}:=\sum\Delta_P\otimes P$ be a polyhedral divisor on $Y$, with tailcone $\sigma$. For every $u\in\sigma^\vee$ we define the evaluation
$$\mathfrak{D}(u):=\sum_{\substack{P\subset Y \\ \Delta_P\neq\emptyset}}\min_{v\in\Delta_P} u(v)\otimes P\in{\rm WDiv}_{\Q}({\rm Loc}\,\dd)$$
where ${\rm Loc}\,\dd:=Y\setminus (\cup_{\Delta_P=\emptyset}P)$ is the \textit{locus} of $\dd$.

\begin{defn}\label{pp}
Let $Y$ be a normal variety. A \textit{proper polyhedral divisor}, also called a \textit{pp-divisor} is a polyhedral divisor $\mathfrak{D}$ on $Y$, such that
\begin{enumerate}[(i)]
\item $\mathfrak{D}(u)$ is Cartier and semiample for every $u\in\sigma^\vee\cap M$.
\item $\mathfrak{D}(u)$ is big for every $u\in(\textrm{relint}\,\sigma^\vee)\cap M$.
\end{enumerate}
\end{defn}

Now, let $\mathfrak{D}$ be a pp-divisor on a semiprojective (i.e. projective over some affine variety) variety $Y$, $\mathfrak{D}$ having tailcone $\sigma\subseteq N_\Q$. This defines an $M$-graded algebra
$$A(\mathfrak{D}):=\bigoplus_{u\in\sigma^\vee\cap M}\Gamma({\rm Loc}\,\dd,\oo(\dd(u))).$$
The affine scheme $X(\dd):={\rm Spec}\,A(\dd)$ comes with a natural action of ${\rm Spec}\,\kk[M]$.
Definition~\ref{pp} is mainly motivated
by the following result~\cite{AH}*{Theorem 3.1 and Theorem 3.4}.

\begin{thm}
Let $\mathcal D$ be a pp-divisor on a 
normal variety $Y$.
Then $X(\dd)$ is an affine T-variety of complexity equal to $\dim Y$.
Moreover, every affine $T$-variety arises like this.
\end{thm}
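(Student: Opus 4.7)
The plan is to prove the two statements separately, relying on the semiampleness-and-bigness hypotheses of Definition~\ref{pp} for the forward direction and on a Chow-quotient construction for the converse.

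For the forward direction, the goal is to show that $A(\dd)$ is an integrally closed, $M$-graded $\kk$-algebra of finite type, of dimension $\mathrm{rank}\,M+\dim Y$. The inequality $\dd(u)+\dd(u')\leq \dd(u+u')$, arising from $\min_{v\in\Delta_P}(u+u')(v)\leq \min_{v\in\Delta_P} u(v)+\min_{v\in\Delta_P} u'(v)$, makes multiplication of sections well defined, so $A(\dd)$ is a graded $\kk$-algebra. Normality follows by exhibiting $A(\dd)$ as an intersection of discrete valuation rings, one for each prime divisor of $\mathrm{Loc}\,\dd$ together with the valuations coming from the vertices of the polyhedra $\Delta_P$. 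For finite generation, pick generators $u_1,\dots,u_s$ of $\sigma^\vee\cap M$; semiampleness lets us assume that some multiple of each $\dd(u_i)$ is base-point free on $\mathrm{Loc}\,\dd$, and bigness on the relative interior supplies enough global sections to run a Veronese-type argument. The $M$-grading gives the $T=\mathrm{Spec}\,\kk[M]$-action, and a weight-space dimension count gives complexity $\dim Y$.

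For the converse, let $X=\mathrm{Spec}\,A$ carry an effective $T$-action, so $A=\bigoplus_{u\in\omega\cap M}A_u$ where $\omega\subseteq M_\qq$ is the saturated weight cone. A Chow quotient produces a normal variety $Y$ of dimension equal to the complexity of $X$, together with a rational map from $X$ to $Y$. For each prime divisor $P\subset Y$ define $h_P(u):=\min\{\mathrm{ord}_P(f):f\in A_u\setminus\{0\}\}$; concavity and positive homogeneity make $h_P$ the support function of a $\sigma$-polyhedron $\Delta_P\subset N_\qq$ with $\sigma=\omega^\vee$. Setting $\dd:=\sum_P \Delta_P\otimes P$ and checking that only finitely many $\Delta_P$ differ from $\sigma$ yields a polyhedral divisor. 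One then exhibits a graded isomorphism $A\to A(\dd)$ by sending $f\in A_u$ to its associated section of $\oo(\dd(u))$, and verifies the semiampleness and bigness conditions of Definition~\ref{pp} from the normality and finiteness of $A$.

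The main obstacle is the converse direction: producing $Y$ as a semiprojective normal variety and checking that the $h_P$ really define polyhedral support functions with common tailcone and only finitely many nontrivial contributions. This requires Chow-quotient technology and a careful analysis of how the weight filtration of $A$ jumps along divisors on $Y$. The semiampleness-plus-bigness conditions of Definition~\ref{pp} are precisely what ensures that $\dd\mapsto X(\dd)$ and $X\mapsto\dd$ are mutually inverse up to the natural equivalence relation on pp-divisors.
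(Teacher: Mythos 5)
The paper does not prove this statement: it is quoted verbatim from Altmann--Hausen (\cite{AH}, Theorems 3.1 and 3.4) as background for Section 3, so there is no in-paper argument to compare yours against. That said, your outline does track the actual Altmann--Hausen proof: the superadditivity $\dd(u)+\dd(u')\leq\dd(u+u')$ giving the ring structure, normality via a valuative description of the section spaces, finite generation from semiampleness plus a Veronese-type reduction, and, for the converse, a quotient construction of $Y$ followed by recovering the polyhedral coefficients from the functions $h_P(u)=\min\{\mathrm{ord}_P(f):f\in A_u\setminus\{0\}\}$. So the strategy is the right one.

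Since you are reconstructing a substantial external theorem, let me flag where your sketch is genuinely thin rather than merely compressed. (1) \emph{Effectiveness and complexity.} "A weight-space dimension count" needs two separate inputs: effectiveness of the $T$-action requires the weight cone $\sigma^\vee$ to be full-dimensional (which holds because the tailcone $\sigma$ is pointed), and the complexity count requires that for $u$ in the relative interior the divisor $\dd(u)$ is big, so that the degree-$u$ pieces have transcendence degree $\dim Y$ over $\kk$; neither is automatic from "counting." (2) \emph{Finite generation.} The Veronese argument needs $\mathrm{Loc}\,\dd$ to be projective over an affine variety (this is why the construction is stated for semiprojective $Y$ -- note the theorem as quoted in the paper even drops this hypothesis) and a reduction to the case where the $\dd(u_i)$ are globally generated; "enough global sections" is where the real work sits. (3) \emph{The converse.} Producing $Y$ is done in \cite{AH} as (the normalization of a component of) the inverse limit of GIT quotients $X/\!\!/_u T$, not a bare Chow quotient, and the finiteness of the set of $P$ with $\Delta_P\neq\sigma$, the piecewise-linearity of $u\mapsto h_P(u)$, and the verification of conditions (i) and (ii) of Definition~\ref{pp} for the resulting $\dd$ each require an argument you have only named. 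None of these is a wrong turn, but as written the proposal is a roadmap to the Altmann--Hausen proof rather than a proof.
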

\subsection{Divisorial fans}
Non-affine $T$-varieties are obtained by gluing 
affine $T$-varieties coming from pp-divisors in
a combinatorial way as specified in Definition~\ref{glue}.
Consider two polyhedral divisors $\mathcal{D}=\sum\Delta_P\otimes P$ and $\mathcal{D'}=\sum\Delta_P'\otimes P$ on $Y$, with tailcones $\sigma$ and $\sigma'$
respectively and such that $\Delta_P\subseteq\Delta_P'$
for every $P$. We then have an inclusion
$$\bigoplus_{u\in\sigma^\vee\cap M}\Gamma({\rm Loc}\,\dd,\oo(\dd(u)))\subseteq \bigoplus_{u\in\sigma^\vee\cap M}\Gamma({\rm Loc}\,\dd,\oo(\dd'(u))),$$
which induces a morphism $X(\dd')\rightarrow X(\dd)$. We say
that $\dd'$ is a \textit{face} of $\dd$, denoted by $\dd'\prec\dd$, if this morphism is an open embedding.

\begin{defn}\label{glue}
A \textit{divisorial fan} on $Y$ is a finite set $\mathcal{S}$ of pp-divisors on $Y$
such that for every pair of divisors $\dd=\sum\Delta_P\otimes P$ and $\dd'=\sum\Delta_P'\otimes P$
in $\mathcal{S}$, we have $\dd\cap\dd'\in\mathcal{S}$ and
$\dd\succ\dd\cap\dd'\prec\dd'$, where 
$\dd\cap\dd':=\sum(\Delta_P\cap\Delta_P')\otimes P$.
\end{defn}

This definition allows us to glue affine $T$-varieties via
$$X(\dd)\longleftarrow X(\dd\cap\dd')\longrightarrow X(\dd'),$$
thus resulting in a scheme $X(\mathcal{S})$.
For the following theorem see~\cite{AHS}*{Theorem 5.3 and Theorem 5.6}.

\begin{thm}
The scheme $X(\mathcal{S})$ constructed above is a $T$-variety of complexity equal to $\dim Y$.  Every $T$-variety
can be constructed like this.
\end{thm}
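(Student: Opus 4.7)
The plan is to construct $X(\Ss)$ by gluing the affine $T$-varieties $X(\dd)$ for $\dd\in\Ss$ that are produced by the preceding affine theorem, and then to verify that the resulting scheme is separated, normal, carries an effective $T$-action, and has complexity $\dim Y$.

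First I would carry out the gluing. For each pair $\dd,\dd'\in\Ss$ the intersection $\dd\cap\dd'$ lies in $\Ss$, and the face relation $\dd\cap\dd'\prec\dd$ supplies an open immersion $X(\dd\cap\dd')\hookrightarrow X(\dd)$, with an analogous one into $X(\dd')$. The cocycle condition on triples $\dd_1,\dd_2,\dd_3\in\Ss$ reduces to the associativity of polyhedral intersection together with transitivity of the face relation, so the pairwise gluings are compatible and produce a scheme $X(\Ss)$ whose affine charts are the $X(\dd)$. The torus $T=\operatorname{Spec}\kk[M]$ acts on each chart through the $M$-grading of $A(\dd)$, the face embeddings are $T$-equivariant, so the action extends to $X(\Ss)$; normality also descends from the charts. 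Since $\dim X(\dd)=\dim Y+\operatorname{rk} M$, one reads off complexity $\dim Y$, and the action on the open dense orbit of some chart is effective globally.

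The main technical step, and the one I expect to be the principal obstacle, is separatedness. Concretely one must show that the image of $X(\dd\cap\dd')$ inside $X(\dd)\times X(\dd')$ is closed, equivalently that the natural map of $M$-graded algebras
\[
 A(\dd)\otimes_{\kk}A(\dd')\longrightarrow A(\dd\cap\dd')
\]
is surjective in every weight. For $u\in(\sigma\cap\sigma')^\vee\cap M$, rewriting the evaluation $(\dd\cap\dd')(u)=\sum_P\min_{v\in\Delta_P\cap\Delta_P'}u(v)\otimes P$ and noting that for each $P$ the minimum is attained at a vertex lying in a common face of $\Delta_P$ and $\Delta_P'$ lets one split a section over $\dd\cap\dd'$ as a product of sections pulled back from $\dd$ and from $\dd'$. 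Care is needed to check that the localizations in the definition of $\operatorname{Loc}\dd$ match across the three divisors, but this is exactly what the face relation encodes.

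Finally, for the converse statement, I would follow the strategy of Altmann--Hausen--Süß. Starting from a $T$-variety $X$ of complexity $c$, choose a rational quotient $X\dashrightarrow Y$ with $Y$ normal and $\dim Y=c$, cover $X$ by $T$-invariant affine opens $U_i$, and apply the affine theorem to each $U_i$ to obtain pp-divisors $\dd_i$ on possibly different birational models of $Y$. The hard step, matching the non-affine half of \cite{AHS}, is to descend these divisors to a common semiprojective model on which they simultaneously live and then to check that the combinatorial intersections $\dd_i\cap\dd_j$ reproduce the gluing data of the $U_i\cap U_j$, so that the collection $\{\dd_i\}$ forms a divisorial fan $\Ss$ with $X\cong X(\Ss)$.
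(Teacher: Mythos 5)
The paper does not prove this statement at all: it is quoted verbatim from \cite{AHS} (Theorems 5.3 and 5.6), so there is no in-paper argument to compare yours against. Your sketch does follow the broad strategy of that reference -- glue the affine charts $X(\dd)$ along the open immersions coming from the face relations, check the $T$-action, normality and dimension chart by chart, and for the converse start from a rational quotient and a $T$-invariant affine cover -- so at the level of architecture it is the right plan.

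There is, however, a genuine gap in the step you yourself flag as the main one. Separatedness of $X(\Ss)$ does \emph{not} follow from the divisorial fan axioms by the weight-by-weight surjectivity of $A(\dd)\otimes_\kk A(\dd')\to A(\dd\cap\dd')$ that you propose: the assertion that, because the minimum of $u$ over $\Delta_P\cap\Delta_P'$ is attained on a common face, a section over $\dd\cap\dd'$ "splits as a product" of sections pulled back from $\dd$ and $\dd'$ is exactly the point that can fail, since one needs a decomposition $u=u_1+u_2$ with $u_1\in\sigma^\vee$, $u_2\in(\sigma')^\vee$ and compatible evaluations globally over $Y$, not just prime divisor by prime divisor. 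Indeed, in \cite{AHS} the glued object is a priori only a (possibly non-separated) normal prevariety, and separatedness is handled by a separate criterion rather than deduced from Definition~\ref{glue}. The converse half of your proposal is likewise only a plan: the descent of the locally produced pp-divisors to a single semiprojective model of the rational quotient, and the verification that their combinatorial intersections reproduce the gluing of the invariant charts, is the substantive content of \cite{AHS}*{Theorem 5.6} and is left entirely open in your outline. As written, the proposal is a roadmap to the proof in the literature rather than a proof.
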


\subsection{Deformations via polyhedral
decompositions}
In \cite[\S 6]{IV}, Ilten and Vollmert define one-parameter
deformations of smooth toric varieties
denoted by
$\pi=\pi(m,\varrho,C)$, where $m$ is a lattice
vector, $\varrho$ is a ray in a fan and $C$ is a connected
component of some graph.
The construction is as follows.

Let $\Sigma$ be a smooth complete fan giving rise to
a toric variety $X=X_\Sigma$. By choosing a $m\in M$
and intersecting the hyperplanes $\{v\in N :  m(v)=-1\}$
and $\{ v\in N :  m(v)=1\}$ with
$\Sigma$ we get two polyheral subdivisions
corresponding to the slices $\Ss_0$ and $\Ss_\infty$ of a
divisorial fan $\Ss$ on $\pp^1$,
describing $X$ as a variety of complexity one. Now, choose
$\varrho\in\Sigma(1)$ such that $ m(\varrho)=-1$ and
recall Definition~\ref{triples} of the graph $\Gamma_\varrho(m)$, 
whose set of vertices is
$$\{\tau\in\Sigma(1):\tau\neq\varrho,  m(\tau)<0\}$$
and whose edges join two vertices whose corresponding
one-dimensional rays lie in a common cone.
Assume $\Gamma_\varrho(m)$ has at least two connected
components, and let $C$ be one of them. This choice
induces a one parameter deformation on $X$ as follows.

Each polyhedron $\Delta\in\Ss_0$ will be decomposed as
$\Delta=\Delta^0+\Delta^1$. If $\Delta$ contains a vertex coming
from a ray in $C$, take $\Delta^0={\rm tail}\,\Delta$ and
$\Delta^1=\Delta$. If $\Delta$ contains no such vertex,
take $\Delta^0=\Delta$ and $\Delta^1={\rm tail}\,\Delta$.
The sets $\{\Delta^0\}_{\Delta\in\Ss_0}$ and $\{\Delta^1\}_{\Delta\in\Ss_0}$
define new  polyhedral subdivisions $\Ss_0^0$ and $\Ss_0^1$
such that $\Ss=\Ss_0^0+\Ss_0^1$. Let $\tilde\Ss$
be the divisorial fan on $\mathbb{A}^1\times\pp^1$ whose only
non-trivial slices are $\Ss_0^0$ at $V(y)$, $\Ss_0^1$ at $V(y-x)$
and $\Ss_\infty$ at $V(y^{-1})$, where we are using coordinates
$(x,y)\in\mathbb{A}^1\times\pp^1$. Then $\mathcal{X}:=X(\tilde\Ss)$
comes with a morphism $\pi:\mathcal{X}\rightarrow\mathbb{A}^1$ which
is a one-parameter deformation of $X$. This deformation is called $\pi(m,\varrho,C)$. 

\begin{prop}
The deformation $\pi(m,\varrho,C)$
is the same as the one described in Theorem \ref{one-param}.
\end{prop}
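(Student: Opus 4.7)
My plan is to apply the Hausen--S\"u\ss{} machinery of~\cite{HS}, which produces the Cox ring of a $T$-variety directly from a divisorial fan, to the fan $\tilde{\Ss}$ of Ilten--Vollmert, and then check that the resulting Cox ring presentation coincides with the trinomial presentation~\eqref{trinomial} together with the $P$-matrix~\eqref{Pmat} of Construction~\ref{P-matrix}. Since $\pi(m,\varrho,C)$ is the projection to the first factor of $\A^1\times\pp^1$ and the $\pi$ of Theorem~\ref{one-param} is the projection to the $T_1$-axis, once the two Cox rings (with their $\Cl$-gradings and their distinguished base coordinate) are identified, the two one-parameter families are identified as well.

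The first step is the identification of generators. The Hausen--S\"u\ss{} recipe attaches a Cox variable to each vertex of each slice of $\tilde{\Ss}$ and to each ray of the common tail fan, together with variables pulled back from the base $\A^1\times\pp^1$. The three non-trivial slices of $\tilde{\Ss}$ sit over $V(y^{-1})$, $V(y)$ and $V(y-x)$, and their vertices $\pi(\varrho_i)/a_i$ are naturally indexed by the rays $\varrho_i$ contributing to them: those with $a_i>0$ for $\Ss_\infty$ yielding $T_{1i}$, those in $\Gamma_\varrho(m)\setminus C$ together with $\varrho$ for $\Ss_0^0$ yielding $T_{3i}$, and those in $C$ together with $\varrho$ for $\Ss_0^1$ yielding $T_{2i}$; the rays of the common tail fan (those with $a_i=0$) contribute $T_{4i}$, and the pullback of the coordinate $x$ on $\A^1$ is the base variable $T_1$. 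A direct inspection shows that the $P$-matrix produced by the Hausen--S\"u\ss{} construction from the denominators $a_i$ and the primitive generators $\pi(\varrho_i)$ is exactly the block matrix $P(m,\varrho,C)$ of~\eqref{Pmat}.

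The second step is the identification of the defining equation. In the Hausen--S\"u\ss{} description of a complexity-one $T$-variety, each triple of non-trivial slices contributes one relation coming from the unique linear dependence among the rational functions cutting out the corresponding special fibers. Here the three functions $y$, $y-x$ and $y^{-1}$ satisfy the relation $y-(y-x)=x$, and after weighting each term by the vertex multiplicities at the respective slice one recovers precisely the trinomial~\eqref{trinomial}, the extra factor $T_1$ in the first summand encoding the pullback of the coordinate $x$ from $\A^1$. The principal technical obstacle is the bookkeeping of the denominators $|a_j|$ of the slice vertices: these must become the exponents $\pm a_j$ of the variables $T_{ij}$ in the trinomial, and matching them block by block is what finally identifies the embedding $\xx\hookrightarrow\tilde X$ with the one of Construction~\ref{P-matrix}, after which the equality $\pi=\pi(m,\varrho,C)$ is immediate from the comparison of base coordinates.
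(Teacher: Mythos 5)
There is a genuine gap, and it sits at the very first step. You propose to read off the trinomial and the $P$-matrix by applying the complexity-one Hausen--S\"u\ss{} recipe (``each triple of non-trivial slices contributes one relation coming from the unique linear dependence among the rational functions cutting out the corresponding special fibers'') directly to the divisorial fan $\tilde\Ss$. But $\tilde\Ss$ lives on the \emph{surface} $\A^1\times\pp^1$, so $X(\tilde\Ss)$ is presented as a $T$-variety of complexity \emph{two} with respect to that torus; the explicit trinomial description of the Cox ring in \cite{HS}*{Corollary 4.9}, and the $P$-matrix of \cite{HS}*{Proposition 4.7}, are statements about complexity-one $T$-varieties over a curve, where the non-trivial slices sit over points of $\pp^1$. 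The three divisors $V(y)$, $V(y-x)$, $V(y^{-1})$ are curves in a surface, not points of $\pp^1$, so the machinery you invoke does not apply to $\tilde\Ss$ as it stands. The paper's proof begins precisely by repairing this: it observes that $\A^1\times\pp^1$ carries the $\kk^*$-action $t\cdot(x,y)=(tx,ty)$, describes the base itself by a divisorial fan on $\pp^1$, and then uses the upgrading result \cite{IV2}*{Proposition 2.1} to re-encode $\xx$ by a divisorial fan $\Ss'$ on $\pp^1$ (with a lattice of rank one higher) having three non-trivial slices. Only after this downgrade does the complexity-one Cox ring machinery produce \eqref{trinomial} and \eqref{Pmat}.

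A second, related inaccuracy is your identification of $T_1$ as ``the pullback of the coordinate $x$ on $\A^1$.'' In the Hausen--S\"u\ss{} presentation every generator is attached to a vertex of a slice or to a ray of the tail fan; there is no slot for pulled-back base coordinates. In the downgraded fan $\Ss'$ the variable $T_1$ arises as the Cox variable of the \emph{extra vertex at height $1$} in the slice containing $\Ss_0^0$, and it is this vertex that accounts for the additional first column $[1,1,0,1]^t$ and bottom row $[1,0,\dots,0]$ of $P(m,\varrho,C)$ in \eqref{Pmat} --- structure that a ``pullback of $x$'' would not explain. Your heuristic relation $y-(y-x)=x$ does point at the right source of the trinomial, and your matching of slice vertices to the index sets $U_1,\dots,U_4$ of Construction~\ref{P-matrix} is essentially correct, but without the upgrade/downgrade step the ``direct inspection'' has no theorem behind it. If you insert that step, the rest of your bookkeeping goes through and coincides with the paper's argument.
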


\begin{proof}
We consider the $\kk^*$-action on
$\mathbb{A}^1\times\pp^1$ given by
$t\cdot(x,y)=(tx,ty)$. This allows us to
describe this surface with a divisorial fan $\mathcal{Z}$
on $\pp^1$ whose tailfan is given by a single
ray on the positive axis and whose only
non-trivial slice has vertices in 0 and 1.
By applying \cite[Prop 2.1]{IV2},
we describe $\xx$ with a new divisorial fan $\Ss'$
on $\pp^1$, having three non-trivial slices.
One non-trivial slice of $\Ss'$ contains $\Ss_0^0$ at
height 0 and a single vertex at height 1, whereas
the other two non-trivial slice of $\Ss'$ are
simply $\Ss_0^1$ and $\Ss_\infty$ embedded
in the corresponding higher dimensional space.
Thus, by \cite[Corollary 4.9]{HS} we
see that the cox ring of $\xx$ is given precisely
by \eqref{trinomial}.
The matrix $P(m,\varrho,C)$ can be
obtained from \cite[Pop 4.7]{HS}.
\end{proof}

\section{Applications}
We use the language developed in
subsection \ref{tangenttoric} and
in section \ref{sec:2} to study
deformations of scrolls and 
deformations of hypersurfaces
of smooth toric varieties.

\subsection{Deformations of scrolls}
Let $n>1$ be an integer and let
$a_1,\ldots,a_n$ be integers. 
We denote by $\mathbb{F}(a_1,\ldots,a_n)$
the $\pp^{n-1}$-bundle (i.e. a scroll)
over $\pp^1$ associated to the sheaf
$\oo_{\pp^1}(a_1)+\ldots+
\oo_{\pp^1}(a_n)$. It can be
defined as the quotient of the space
$(\A^2\backslash 0)\times(\A^n\backslash 0)$
by the following $(\kk^*)^2$-action.

\begin{eqnarray*}(\lambda,1)\cdot (t_1,t_2,x_1,\ldots,x_n)
&=&(\lambda t_1, \lambda t_2,
\lambda^{-a_1} x_1,\ldots,
\lambda^{-a_n}x_n)\\
(1,\mu)\cdot (t_1,t_2,x_1,\ldots x_n)
&=& (t_1,t_2,\mu x_1,\ldots,
\mu x_n).\end{eqnarray*}
The action on the
fist two coordinates gives
$\F(a_1,\ldots,a_n)$ a morphism
over $\pp^1$ by projecting
on the first factor

\[
 \xymatrix@C=3cm{
  (\A^2\backslash 0)\times(\A^n\backslash 0)\ar[r]\ar[d] & \F(a_1,\ldots,a_n)\ar[d]\\
   (\A^2\backslash 0)\ar[r]&\pp^1
 }
\]

\begin{rem}\label{isomscroll}
It can be shown (cf. \cite[Ch. 2]{Rei}) that
$\F(a_1,\ldots,a_n)\cong\F(b_1,\ldots,b_n)$
if and only if there exists $c\in\zz$
and a permutation $\sigma\in S_n$ such
that for every $i$ we have
$a_i=b_{\sigma(i)}+c$.
\end{rem}

%\subsection{Deforming scrolls}

\begin{prop}\label{deformscroll}\ 
\begin{enumerate}[(a)]
\item A scroll over $\pp^1$
is rigid if
and only if it is isomorphic to
$\mathbb{F}(a_1,\ldots,a_n)$
where $\{a_i\}_{i=1}^n\subseteq\{0,1\}$.

\item Let $X=\mathbb{F}(a_1,\ldots,a_n)$,
such that $d:=a_1-a_2>2$. For any
$d'<d$, the scroll $X$ admits a
deformation to $\mathbb{F}(a_1-d',a_2+d',a_3,
\ldots,a_n)$.
\end{enumerate}
\end{prop}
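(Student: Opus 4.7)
The strategy is to use Proposition~\ref{basis-tangent} to reduce both claims to statements about admissible triples for $X=\F(a_1,\ldots,a_n)$, and Theorem~\ref{one-param} to realise the claimed deformations.

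I would first fix coordinates. By Remark~\ref{isomscroll} we may assume $a_n=0$. Taking $N=\zz^n$ with standard basis $e_1,\ldots,e_n$, the rays of $\Sigma$ are $u_i=e_i$ for $i=1,\ldots,n-1$, $u_n=-e_1-\cdots-e_{n-1}$, $v_0=e_n$ and $v_\infty=-e_n+\sum_{i=1}^{n-1}a_ie_i$; the maximal cones combine any $n-1$ of the fiber rays $u_1,\ldots,u_n$ with either $v_0$ or $v_\infty$. Consequently: \textbf{(i)} any two fiber rays lie in a common maximal cone, \textbf{(ii)} each fiber ray lies in a common cone with both $v_0$ and $v_\infty$, and \textbf{(iii)} $v_0$ and $v_\infty$ never lie in a common cone.

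For (a), I would classify admissible triples $(m,\varrho,C)$. Facts (i)--(iii) force $\Gamma_\varrho(m)$ to be connected whenever $\varrho\in\{v_0,v_\infty\}$, and they also force it to be connected whenever $\varrho=u_k$ and any fiber ray is a vertex. Admissibility with $\varrho=u_k$ therefore forces $m(u_i)\ge 0$ for all $i\ne k$, $m(v_0)<0$ and $m(v_\infty)<0$, in which case $\Gamma_{u_k}(m)=\{v_0\}\sqcup\{v_\infty\}$ and both singletons are proper components. Writing $m=(m_1,\ldots,m_n)$ in the dual basis with $m_k=-1$, a short inspection of these inequalities shows that an $m\in M$ satisfying them exists if and only if some $a_i-a_k\ge 2$, hence $H^1(X,T_X)\ne 0$ if and only if $\max_i a_i-\min_j a_j\ge 2$. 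Combined with Remark~\ref{isomscroll}, this characterises the rigid scrolls as those with $\{a_i\}\subseteq\{0,1\}$ up to a common shift, proving (a).

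For (b), I would exhibit an explicit admissible triple yielding the named deformation. Take $\varrho=u_1$, $m\in M$ with $m_1=-1$, $m_2=1$, $m_3=\cdots=m_{n-1}=0$, $m_n=-d'$, and $C=\{v_\infty\}$; the inequalities $m(v_0)=-d'<0$ and $m(v_\infty)=d'-d<0$ hold precisely when $0<d'<d$, and the analysis above shows that $(m,u_1,\{v_\infty\})$ is admissible. From the $P$-matrix~\eqref{Pmat} and the trinomial~\eqref{trinomial} one reads off the Cox ring of the total space $\mathcal X$; over $T_1=\lambda\ne 0$, eliminating one variable from the trinomial and identifying the induced $\Cl$-grading on the resulting polynomial ring yields the Cox ring of $\F(a_1-d',a_2+d',a_3,\ldots,a_n)$. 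Equivalently, in the polyhedral-divisor language of Section~\ref{sec:3}, the deformation $\pi(m,u_1,\{v_\infty\})$ slides the vertex of the slice $\Ss_0$ attached to $u_1$ by $d'$ lattice units toward the vertex attached to $u_2$, producing the divisorial fan of the target scroll.

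The combinatorics in (a) is routine once (i)--(iii) are recorded. The main obstacle will be the identification in (b) of the generic fiber with $\F(a_1-d',a_2+d',a_3,\ldots,a_n)$: it requires tracking the induced $\Cl$-grading on $\kk[\bar{\mathcal X}]/\langle T_1-\lambda\rangle$ and matching it with the standard presentation of the Cox ring of the target scroll, either directly from the $P$-matrix or via the polyhedral description of Section~\ref{sec:3}.
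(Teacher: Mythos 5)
Your proposal follows essentially the same route as the paper: both parts reduce (a) to classifying admissible triples using the fact that the two section rays form the unique non-adjacent pair of rays of $\Sigma$, and both prove (b) by specializing $T_1$ to a nonzero value, eliminating one variable from the trinomial, and matching the induced grading with that of the target scroll. The only real difference is cosmetic: you compute with explicit ray coordinates in $N$, while the paper runs the Gale-dual computation with the degree matrix $Q$ and the relation $Q(u)=0$. Two small points need attention. First, your fact (i) --- that any two fiber rays span a common cone --- fails for $n=2$, so the Hirzebruch case must be treated separately (the paper disposes of it by citing the classical results); your classification argument is valid only for $n\ge 3$. Second, the inequality in your part (a) is transposed: with $\varrho=u_k$ the conditions $m(v_0)<0$ and $m(v_\infty)<0$ force $a_k-a_i\ge 2$ for some $i$ (the weight attached to the chosen ray $\varrho$ is the \emph{larger} one), not $a_i-a_k\ge 2$; this does not affect the conclusion of (a), and your explicit triple in (b) is already consistent with the correct direction. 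Finally, you rightly flag the generic-fiber identification in (b) as the remaining work; the paper carries it out by listing the components of the irrelevant ideal of $\mathcal X$, setting $T_1=t\ne 0$, eliminating $T_{(1,4)}$, and reading off $\deg T_{(2,3)}$ and $\deg T_{(3,3)}$ via Proposition~\ref{fiberthm}, which is exactly the computation your sketch calls for.
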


\begin{proof}
If $n=2$, we have a Hirzebruch surface and
these results are well known.
They can be found for example in
\cite[\S 3]{Ilt}. Therefore, we will assume $n\ge 3$.
The degree matrix of $\F(a_1,\ldots, a_n)$
is
$$Q=\left[
\begin{array}{rrrcr}
1&1&-a_1&\cdots&-a_n\\
0&0&1&\cdots&1
\end{array}
\right].$$
Since the components of
the irrelevant ideal are
$(1,2)\ \textrm{and}\ (3,\ldots,n+2)$
and $n\ge 3$,
it is clear that $(\varrho_1,\varrho_2)$ is
the only pair
of rays in $\Sigma$ that are not in a common
cone. Thus, if we choose
an admissible triple $(m,\varrho,C)$
we must have that
\begin{itemize}
\item $m(\varrho_1)<0$ and
$m(\varrho_2)<0$. 
\item $ m(\varrho_k)=-1$
for some $3\le k\le n+2$. This $\varrho_k$
will be $\varrho$.
\item $m(\varrho_i)\ge 0$
for every $i=3,\ldots,n+2$, $i\neq k$. 
\end{itemize}
since this conditions are the only
way to ensure that $\Gamma_\varrho(m)$
has at least two connected components.
Now, define $u_i:=m(\varrho_i)$
and form the column vector
$$u:=(u_1,\ldots,u_{n+2})^t.$$
The conditions above become
\begin{equation}\label{usigns}
u_1,u_2<0;\quad \ u_k=-1;\quad \ u_i\ge 0,\ i\notin\{1,2,k\}.\end{equation}
From the Gale duality
(cf. \cite[Ch. II, \S 1.2]{ADHL}) we see
that $Q(u)=0$, which when written
as a system of equations is
equivalent to

\begin{numcases}{}
u_1+u_2-\sum_{i=3}^{n+2}u_ia_{i-2}=0\label{usysteq}\\
\sum_{i=3}^{n+2}u_i=0\label{usysteq2}
\end{numcases}
From $\eqref{usigns}$ and $\eqref{usysteq2}$
we deduce there exists $3\le \ell\le n+2$,
with $\ell\neq k$, such that
\begin{equation}\label{1and0}
u_\ell=1\ {\rm and}\ 
u_i=0\ {\rm for}\  i\notin\{1,2,k,\ell\}.\end{equation}

Aditionally, by $\eqref{usysteq}$,
\begin{equation}\label{gttwo}
a_{k-2}-a_{\ell-2}=-u_1-u_2\ge 2.
\end{equation}

Thus, $\mathbb{F}(a_1,\ldots,a_n)$
has an admissible triple if and only
if two of the $a_i$ have distance
at least 2,
proving part (a).

Assume now that we are in the case
where the admissible triple $(m,\varrho,C)$
exists. We will set $C:=\{\varrho_1\}$
From here on, to simplify notation
without loss of generality, let
$k=3$ and $\ell=4$.
Recall that $u_i= m(\varrho_i)$.
Therefore, \eqref{usigns}
and \eqref{1and0} imply that
the trinomial of the Cox ring of
the total deformation space $\xx$ is
\begin{equation}\label{trincox}
T_1T_{(1,4)}-T_{(2,3)}T_{(2,1)}^{-u_1}
+T_{(3,3)}T_{(3,2)}^{-u_2}\end{equation}
The irrelevant ideal $\mathcal{I}$
of $\xx$
is given by the following components
\begin{eqnarray*}
\mathcal{I}_1&=&\left\langle T_{(2,1)}, T_{(3,2)}\right\rangle\\
\mathcal{I}_2&=&\left\langle
T_{(2,1)},T_{(2,3)},T_{(1,4)}\right\rangle + \left\langle T_{(4,5)},T_{(4,6)},
\ldots,T_{(4,n+2)}\right\rangle\\
\mathcal{I}_3&=& \left\langle
T_{(3,2)},T_{(3,3)},T_{(1,4)}\right\rangle + \left\langle T_{(4,5)},T_{(4,6)},
\ldots,T_{(4,n+2)}\right\rangle\\
\mathcal{I}_4&=&\left\langle
T_{(2,3)},T_{(3,3)},T_{(1,4)}\right\rangle +\left\langle T_{(4,5)},T_{(4,6)},
\ldots,T_{(4,n+2)}\right\rangle
\end{eqnarray*}
In the general fiber of the deformation,
 we put $T_1=t\in\kk^*$,
 so by \eqref{trincox}, the variable $T_{(1,4)}$ can
 be replaced by the other variables.
 This reduces $\mathcal{I}$ to
 $$\langle T_{(2,1)},T_{(3,2)}\rangle\ ,\ 
\left\langle
T_{(2,3)},T_{(3,3)}\right\rangle +\left\langle
T_{(4,5)},T_{(4,6)},
\ldots,T_{(4,n+2)}\right\rangle.$$
By Proposition \ref{fiberthm},
and using the same notation, we have
 $${\rm deg}\left(T_{(2,3)}\right)
 ={\deg}\left(S_2^{-u_2}S_3\right);
 \quad {\rm deg}\left(T_{(3,3)}\right)
={\rm deg}\left(S_1^{-u_1}S_3\right).$$
This means that both the irrelevant ideal
and the degree matrix of the general fiber
of the deformation match that of
$\mathbb{F}(a_1+u_1,a_1+u_2,a_3,\ldots,a_n)$.
Then (b) follows after noticing that \eqref{gttwo}
implies $a_1+u_2=a_2-u_1$.
\end{proof}

\begin{prop}
The scroll $\F(a_1,\ldots,a_n)$ can be deformed to
$$\F(\underbrace{1,1,\ldots,1}_r,
\underbrace{0,0\ldots,0}_{n-r})$$ where
$$r\equiv \sum_{i=1}^n a_i\ {\rm (mod\ }n{\rm )}.$$
\end{prop}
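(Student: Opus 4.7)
The strategy is to iterate Proposition~\ref{deformscroll}(b), interleaving it with the reordering/shift isomorphisms of Remark~\ref{isomscroll}, until the splitting type is contained in $\{0,1\}$; the value of $r$ will then fall out of a simple conserved quantity.

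First, I would check that the residue $\sum_{i=1}^n a_i \pmod{n}$ is preserved along such a chain. The isomorphisms of Remark~\ref{isomscroll} reindex and shift all $a_i$ by a common integer $c$, changing the sum by the multiple $nc$, while one step of Proposition~\ref{deformscroll}(b) replaces $(a_1,a_2)$ by $(a_1-d',a_2+d')$, leaving the sum unchanged. For the target $\F(\underbrace{1,\ldots,1}_r,\underbrace{0,\ldots,0}_{n-r})$ this residue equals $r$, which forces the congruence stated in the proposition.

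Next, I would introduce a termination potential
\[
\phi(a_1,\dots,a_n) := \sum_{i<j}(a_i-a_j)^2 \in \zz_{\ge 0},
\]
and show that whenever two entries differ by at least $2$, one may reorder them into the first two positions via Remark~\ref{isomscroll}, apply Proposition~\ref{deformscroll}(b) with some $0<d'<a_1-a_2$, and strictly decrease $\phi$. Expanding only the affected terms gives a drop of exactly $2n\,d'(a_1-a_2-d')>0$, so the iteration halts after finitely many steps at a tuple whose entries lie in some $\{c,c+1\}$. A final application of Remark~\ref{isomscroll} subtracting $c$ from every coordinate produces $\F(\underbrace{1,\ldots,1}_r,\underbrace{0,\ldots,0}_{n-r})$, and the congruence from the first step pins down $r$.

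The main subtlety I expect concerns the hypothesis ``$d>2$'' appearing in Proposition~\ref{deformscroll}(b) versus the bound $d\ge 2$ the termination argument really needs: the boundary case $d=2$ arises whenever a difference of exactly $2$ must be collapsed via $d'=1$. However, inequality~\eqref{gttwo} in the proof of Proposition~\ref{deformscroll} only requires $u_1,u_2\le -1$ and yields the deformation for every $d'<d$, so $d=2$, $d'=1$ is in fact already covered. A secondary point is the transitivity of ``deforms to'' along a chain of one-parameter families, which is standard and can be arranged either by gluing the families over a reducible base curve or by observing that the intermediate scrolls lie in a single connected component of the deformation space.
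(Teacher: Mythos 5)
Your proof is correct and reaches the same conclusion, but organizes the iteration differently from the paper. The paper first normalizes the splitting type via Remark~\ref{isomscroll} to a decreasing non-negative sequence with $a_n=0$ and then runs an induction on $a_1$, choosing at each stage a specific batch of unit transfers from the maximal entries to the zero entries (comparing the multiplicities $M$ and $m$) so that the normal form is preserved and $a_1$ strictly drops. You instead allow an arbitrary transfer between any two entries differing by at least $2$ and prove termination with the potential $\phi=\sum_{i<j}(a_i-a_j)^2$; your computed drop $2n\,d'(a_1-a_2-d')$ is correct, and $\phi$ is invariant under the shifts and permutations of Remark~\ref{isomscroll}, so it is a legitimate monovariant on isomorphism classes. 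Your scheme is more flexible (no normal form needs to be maintained) at the cost of introducing the potential; the paper's greedy choice makes the decrease obvious ($b_1<a_1$) but requires the case split on $M$ versus $m$. Both arguments use the same conserved quantity $\sum_i a_i \bmod n$ to identify $r$, and both implicitly compose finitely many one-parameter deformations, a point you at least acknowledge. Your remark about the hypothesis of Proposition~\ref{deformscroll}(b) is a genuine and necessary one: the statement there reads $d>2$, yet both your argument and the paper's own induction apply it with $d=2$, $d'=1$ (e.g.\ when $a_1=2$ and some $a_i=0$); as you note, the proof of Proposition~\ref{deformscroll} via~\eqref{gttwo} only needs $d\ge 2$, so the inequality in that statement should be read as $d\ge 2$.
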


\begin{proof}
By Remark \ref{isomscroll}, we
can assume that the sequence
$a_1,\ldots,a_n$ is decreasing and non-negative,
with $a_n=0$.
We proceed by induction over $a_1$.
The cases $a_1=0$ and $a_1=1$ are trivial.
Assume now that $a_1\ge 2$. Let
$$M=\#\{i : a_i=a_1\},\quad
m=\#\{i : a_i=0\}.$$
If $M< m$, then by Proposition \ref{deformscroll}
the scroll can be deformed
by subtracting 1 from each $a_1,\ldots,a_M$
and adding 1 to $M$ of the $a_i$ that equal 0.

If $M\ge m$, the scroll can be deformed by
subtracting 1 from each $a_1,\ldots,a_m$
and adding 1 to every $a_i$ that equals 0.
Then we subtract 1 from every $a_i$ (recall that this
does not change the variety).

Note that in both cases, and after just a permutation
of indices, we have deformed the original scroll
to $\F(b_1,\ldots,b_n)$ where $b_i\ge b_{i+1}$
for every $i$, the $b_i$ are all
non-negative and $b_n=0$. Furthermore
$\sum a_i \equiv \sum b_i$ (mod $n$) and
$b_1<a_1$
so the induction is complete.
\end{proof}

\subsection{Deformation of hypersurfaces}

Let $X$ be a toric variety and let $\kk[S_1,\ldots, S_r]$
be its Cox ring. Choose an admissible
triple $(m,\varrho,C)$ and construct the corresponding
deformation as explained in section \ref{P-matrix}.
The map $\eta$ given in section \ref{centralfiber}
is defined by a semigroup homomorphism
$\nu_+\colon \zz_{\ge 0}^{r+1}
\rightarrow \zz_{\ge 0}^{r}$
which can be naturally extended to
a group homomorphism
$\nu\colon \zz^{r+1}
\rightarrow \zz^{r}$. Notice that $\nu$
is the transpose of $\psi$ defined in \eqref{matrixnu}.
A homogeneous polynomial $f\in\kk[S_1,\ldots,S_r]$ can be written as a sum
$$f=c_1\mathfrak{m}_1 +\ldots + c_k\mathfrak{m}_k$$
where $c_i\in\kk$ and $\mathfrak{m}_i$ is a monomial for all $i$.
A homogeneous polynomial $\tilde{f}\in
\kk[T_1,T_{i,j}]$ such that $f=\eta(\tilde{f})$ will exist
if and only if the exponent vector of each $\mathfrak{m}_i$
is in the image of $\nu_+$. In this case, if we
let $g\in\kk[T_1,T_{ij}]$
be the trinomial \eqref{trinomial} corresponding
to $(m,\varrho,C)$, the subvariety
$$V(\tilde{f},g)\subset \tilde{X}$$
defines a one-parameter deformation of $X$. Observe that
if $\tilde{f}'\in\kk[T_1,T_{ij}]$ is another lifting of $f$,
i.e. $\eta(\tilde{f}')=f$, then $\tilde{f}'-\tilde{f}
\in\langle g,T_1\rangle$ so that the equality
$V(\tilde{f},g,T_1)=V(\tilde{f}',g,T_1)$ holds.

Let $Q_X\colon \zz^r\rightarrow {\rm Cl}(X)$ be the
grading map of the toric variety $X$, i.e $Q_X$ maps
$e\in\zz^r$ to the class of the divisor $\sum_{i=1}^r
e_iD_i$, where $D_i$ is the $i$-th invariant prime divisor of
$X$. Given a class $w\in {\rm Cl}(X)$ and an
equivariant divisor $D$ of $X$ such that $[D]=w$,
a monomial basis of the Riemann-Roch space of
$D$ is in bijection with the set
$$Q_X^{-1}(w)\cap\zz^r_{\ge 0}.$$
The subset of monomials that can be lifted to
monomials of $\kk[T_1,T_{ij}]$ via $\eta$ is in bijection
with
$${\rm im}(\nu_+)\cap Q_X^{-1}(w)\cap\zz^r_{\ge 0}.$$

\begin{prop} The set ${\rm im}(\nu_+)$
is the Hilbert basis of the rational 
polyhedral cone that it generates.
\end{prop}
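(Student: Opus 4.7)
The plan is to identify the columns of $\nu$ explicitly, verify that each is irreducible in the semigroup $\mathrm{im}(\nu_+)$, and then show that $\mathrm{im}(\nu_+)$ coincides with the set of lattice points of the cone it generates. Reading off the rows of the matrix $\psi$ in \eqref{matrixnu}, one sees that for each $j\neq i(\varrho)$ there is exactly one column of $\nu=\psi^T$ equal to the standard basis vector $e_j\in\zz^r$, while the two columns labelled by $(2,i(\varrho))$ and $(3,i(\varrho))$ are
\[
v_+\;:=\;e_{i(\varrho)}+\sum_{\varrho_j\in \Gamma_\varrho(m)\setminus C}(-a_j)\,e_j,\qquad v_-\;:=\;e_{i(\varrho)}+\sum_{\varrho_j\in C}(-a_j)\,e_j.
\]
Hence $\mathrm{im}(\nu_+)$ is generated as a semigroup by the $r+1$ vectors $G:=\{e_j:j\neq i(\varrho)\}\cup\{v_+,v_-\}$.

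Irreducibility of each $e_j$ in $\mathrm{im}(\nu_+)$ is immediate from the coordinate sum. For $v_\pm$, any nontrivial decomposition in $\mathrm{im}(\nu_+)$ must split the $i(\varrho)$-coordinate as $0+1$, so one summand is a $\zz_{\geq 0}$-combination of the $e_j$'s alone while the other contains exactly one copy of $v_+$ or $v_-$. Since the positive supports of $v_+-e_{i(\varrho)}$ and $v_--e_{i(\varrho)}$ live in the disjoint index sets $\Gamma_\varrho(m)\setminus C$ and $C$, matching the remaining coordinates forces the trivial decomposition, showing that $G$ is the minimal generating set.

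For the identification with the Hilbert basis of $\sigma:=\mathrm{cone}(G)$, I would give the inequality description of $\sigma$, namely $x_j\geq 0$ for every $j$ together with
\[
\min_{\varrho_j\in \Gamma_\varrho(m)\setminus C}\frac{x_j}{-a_j}+\min_{\varrho_j\in C}\frac{x_j}{-a_j}\;\geq\; x_{i(\varrho)},
\]
and, given an integer point $x\in\sigma$, produce non-negative integers $\mu_+,\mu_-$ with $\mu_++\mu_-=x_{i(\varrho)}$ subject to $\mu_+\leq x_j/(-a_j)$ for $j\in\Gamma_\varrho(m)\setminus C$ and $\mu_-\leq x_j/(-a_j)$ for $j\in C$. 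Setting $\lambda_j:=x_j-\mu_\pm(-a_j)\geq 0$ then expresses $x=\mu_+v_++\mu_-v_-+\sum\lambda_je_j$ as a non-negative integer combination of the elements of $G$.

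The hard part will be producing these integer $\mu_\pm$, since the two rational minima bounding them may both be non-integral while still summing to an integer $\geq x_{i(\varrho)}$. I expect to handle this by induction on $x_{i(\varrho)}$, subtracting either $v_+$ or $v_-$ at each stage and leveraging the smoothness of $X$ — which via the $P$-matrix of the Cox construction controls how the $|a_j|$'s interact across neighbouring maximal cones — to guarantee that at every step at least one of $x-v_+$, $x-v_-$ remains in $\sigma\cap\zz^r$, eventually reducing to the trivial case $x_{i(\varrho)}=0$ where the claim is visible.
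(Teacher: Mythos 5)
Your identification of the generators of ${\rm im}(\nu_+)$ --- the $e_j$ for $j\neq i(\varrho)$ together with $v_+$ and $v_-$ --- is correct, as is your irreducibility argument, and you have put your finger on exactly the right difficulty: everything reduces to producing \emph{integral} $\mu_\pm$ with $\mu_++\mu_-=x_{i(\varrho)}$, $\mu_+(-a_j)\le x_j$ for $\varrho_j\in\Gamma_\varrho(m)\setminus C$ and $\mu_-(-a_j)\le x_j$ for $\varrho_j\in C$. But that step is not merely hard; it is false, so no induction and no appeal to the smoothness of $X$ can close the gap. Take $X=\mathbb{F}_4$ with rays $v_1=(1,0)$, $v_2=(0,1)$, $v_3=(-1,4)$, $v_4=(0,-1)$, and the admissible triple $m=(-2,-1)$, $\varrho=v_2$, $C=\{v_1\}$ (so $-a_1=-a_3=2$). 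The generators are $e_1,e_3,e_4$, $v_+=e_2+2e_3$, $v_-=e_2+2e_1$, and the lattice point $x=e_1+e_2+e_3=\tfrac12v_++\tfrac12v_-+\tfrac12e_1$ lies in the cone; yet the only integral candidates $(\mu_+,\mu_-)\in\{(1,0),(0,1)\}$ each violate one of the bounds, so $x\notin{\rm im}(\nu_+)$. Equivalently, both $x-v_+$ and $x-v_-$ acquire a negative coordinate, so the very first step of your proposed induction fails. Hence ${\rm im}(\nu_+)$ is not saturated and its generators are not the Hilbert basis of the cone they span.

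For comparison, the paper's own proof takes a shortcut at precisely this point: it notes that deleting either the $(2,i(\varrho))$- or the $(3,i(\varrho))$-column of $A_\nu$ leaves a unimodular matrix and concludes that the cone is the \emph{union} of the two resulting smooth cones. That union claim is exactly the statement you were trying to establish by induction, and the same point refutes it: in the example above $x$ lies in neither ${\rm cone}(e_1,e_3,e_4,v_+)$ nor ${\rm cone}(e_1,e_3,e_4,v_-)$, because for $0<t<1$ the cross-section points $tv_++(1-t)v_-$ dominate neither $v_+$ nor $v_-$ coordinatewise whenever both $C$ and $\Gamma_\varrho(m)\setminus C$ are nonempty. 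So your instinct about where the crux lies is right, but the honest conclusion is that the proposition requires an additional hypothesis. Your own computation in fact shows when it holds: if $-a_j=1$ for all $\varrho_j$ in $C$, or for all $\varrho_j$ in $\Gamma_\varrho(m)\setminus C$, then one of the two bounds on $\mu_\pm$ is an integer, one may take $\mu_-=\min\bigl(x_{i(\varrho)},\min_{\varrho_j\in C}x_j\bigr)$ (say), and the real feasibility inequality then yields the integral decomposition. Outside that case the statement fails.
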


\begin{proof}
Let $A_\nu$ be the matrix associated to the
map $\nu$ and let $j_\varrho$
be the index such that $S_{j_\varrho}$ corresponds
to the ray $\varrho$ in $\Sigma_X$.
Due to the way $\eta$ is
defined, it is clear that by removing the $(2,j_\varrho)$-th
column from $A_\nu$, and after an adequate rearrangement
of its columns, we obtain a matrix with the following properties:
\begin{itemize}
\item All the entries in the diagonal are 1.
\item Only one column has non-zero entries outside of the diagonal.
\end{itemize}
It is easy to see that such a matrix has determinant equal to 1.

Similarly, we can remove the $(3,j_\varrho)$-th column
from $A_\nu$ to get a matrix with determinant 1. This
shows that the cone generated by the columns
of $A_\nu$ is the union of two smooth cones
(in the sense of toric geometry), which
proves the statement.

\end{proof}

Let $P_X$ be the matrix whose columns
are the generators of the rays of $\Sigma_X$.
Let $\tilde P$ be the the minor of $P(m,\varrho,C)$
resulting from removing the leftmost column and bottom row.
Let $\tilde Q$ be the cokernel of $\tilde P^*$, i.e.
the grading matrix of $\tilde X$ after removing
the null vector column corresponding to $T_1$.
From the Cox construction,
we get the following commutative diagram
of group homomorphisms
with exact rows
\[
 \xymatrix@C=2cm{
  0\ar[r]&\tilde{M}\ar[r]^{\tilde{P}^*}\ar[d]^\xi &
  \zz^{r+1}\ar[r]^{\tilde{Q}} \ar[d]^{\nu}
  & {\rm Cl}(\tilde X)\ar[d]^{\bar\nu}\ar[r]&0\\
  0\ar[r]&M\ar[r]^{P_X^*} & \zz^r\ar[r]^{Q_X}&{\rm Cl}(X)\ar[r]&0
 }
\]
where the square on the left is the dual of
\eqref{comdiag} and
$\bar\nu$ is uniquely defined by $\nu$.
Denote the exponent vector of a monomial $\mathfrak{m}$ by
${\rm\bf v}(\mathfrak m)$. Then we have
$$\textstyle\ker\nu=
{\rm\bf v}\left( \prod_{(2,j)\in U_2}T_{2j}^{-a_{j}}\right)
 - {\rm\bf v}\left(\prod_{(3,j)\in U_3}T_{3j}^{-a_{j}} \right)
 \subseteq \ker\tilde{Q},$$
which together with the surjectivity of $\xi$ and $\nu$,
imply that $\bar\nu$ is an isomorphism.

\begin{ex}
We now turn our attention to the case
of Hirzebruch surfaces, i.e. $X=\mathbb{F}_n$.
The fan $\Sigma_X$ has four rays $\rho_1,
\rho_2,\rho_3,\rho_4$, generated respectively by
$$v_1=(1,0),\ 
v_2=(0,1),\ v_3=(-1,n),\ v_4=(0,-1).$$
Let $D_1,D_2,D_3,D_4$ be the corresponding invariant
divisors.
In this case we have ${\rm Cl}(X)\cong \zz^2$
generated by $[D_1]$ and $[D_2]$, along with
$$P_X=\left[\begin{array}{rrrr}
1&0&-1&1\\ 0&1&n&-1
\end{array}
\right],\quad
Q_X=\left[\begin{array}{rrrr}
1&0&1&n\\ 0&1&0&1
\end{array}\right],$$
plus a section $s$ for $Q$ and
a projection $\pi$ for $P^*$ given by
$$s=\left[\begin{array}{rr}
1&0\\0&1\\0&0\\0&0
\end{array}
\right],\quad
\pi=\left[\begin{array}{rrrr}
0&0&-1&0\\ 0&0&0&-1
\end{array}\right].$$
We choose $\omega=(a,b)\in {\rm Cl}(X)$, corresponding
to the class $a[D_1]+b[D_2]$, with 
$a> bn> 0$ to guarantee ampleness.
Then, by \cite[\S 9.1]{CLS}, the polyhedron
$\pi(Q_X^{-1}(\omega)\cap \zz^r_{\ge 0})$
is a trapezoid with vertices (in counterclockwise order)
$$(0,0),\ (-a,0),\ (-a,-b),\ (-nb,-b).$$
Applying $P^*+(a,b,0,0)$ to it, we
obtain the trapezoid $Q_X^{-1}(\omega)\cap\zz^{r}_{\ge 0}$,
whose vertices are
$$(a,b,0,0),\ (0,b,a,0),\ (0,0,a-bn,b),\ (a-bn,0,0,b).$$
If we now consider the deformation given by
the admissible triple $(m,\varrho,C)$
where $m=[-\alpha,-1],\ \varrho=\rho_2,\ C=\{\rho_1\}$
and $0<\alpha <n$, we get
$$\tilde P=\left[\begin{array}{rrrrc}
1&-\alpha&-1&0&0\\ 1&0&0&-1&{\scriptstyle\alpha -n}\\
0&1&0&0&-1\ \ 
\end{array}
\right],\quad
\nu=\left[\begin{array}{rrcrr}
0&1&0&\alpha&0\\ 0&0&1&1&0 \\ 
0&0&{\scriptstyle n-\alpha}&0&1 \\ 1&0&0&0&0
\end{array}\right].$$
Notice that the vertices of
$Q_X^{-1}(\omega)\cap\zz^{r}_{\ge 0}$ can now
be written as
$$\nu(0,a-b\alpha,0,b,0),\ \nu(0,0,b,0,a-bn+b\alpha),\ 
\nu(b,0,0,0,a-bn),\ \nu(b,a-nb,0,0,0),$$
which shows that the trapezoid is contained in ${\rm im}(\nu_+)$.
This means that when we deform Hirzebruch surfaces,
every function in the Riemann-Roch space
of the class $\omega$ can be lifted via $\eta$.

%Moreover, if we take the general fiber of the
%deformation $\mathcal{X}\rightarrow \mathbb{A}^1$,
%whose Cox ring is 

\end{ex}

$$$$
\begin{bibdiv}
\begin{biblist}

\bib{ADHL}{book}{
   author={Arzhantsev, Ivan},
   author={Derenthal, Ulrich},
   author={Hausen, J{\"u}rgen},
   author={Laface, Antonio},
   title={Cox rings},
   series={Cambridge Studies in Advanced Mathematics},
   volume={144},
   publisher={Cambridge University Press, Cambridge},
   date={2015},
   pages={viii+530},
   isbn={978-1-107-02462-5}
}

\bib{AH}{article}{
   author={Altmann, Klaus},
   author={Hausen, J{\"u}rgen},
   title={Polyhedral divisors and algebraic torus actions},
   journal={Math. Ann.},
   volume={334},
   date={2006},
   number={3},
   pages={557--607},
   issn={0025-5831}
}

\bib{AHS}{article}{
   author={Altmann, Klaus},
   author={Hausen, J{\"u}rgen},
   author={S{\"u}ss, Hendrik},
   title={Gluing affine torus actions via divisorial fans},
   journal={Transform. Groups},
   volume={13},
   date={2008},
   number={2},
   pages={215--242},
   issn={1083-4362}
}

\bib{Alt}{article}{
   author={Altmann, Klaus},
   title={Minkowski sums and homogeneous deformations of toric varieties},
   journal={Tohoku Math. J. (2)},
   volume={47},
   date={1995},
   number={2},
   pages={151--184},
   issn={0040-8735}
}

\bib{CLS}{book}{
   author={Cox, David A.},
   author={Little, John B.},
   author={Schenck, Henry K.},
   title={Toric varieties},
   series={Graduate Studies in Mathematics},
   volume={124},
   publisher={American Mathematical Society},
   place={Providence, RI},
   date={2011},
   pages={xxiv+841},
   isbn={978-0-8218-4819-7}
}

\bib{HS}{article}{
   author={Hausen, J{\"u}rgen},
   author={S{\"u}{\ss}, Hendrik},
   title={The Cox ring of an algebraic variety with torus action},
   journal={Adv. Math.},
   volume={225},
   date={2010},
   number={2},
   pages={977--1012},
   issn={0001-8708}
}

\bib{HW}{article}{
   author={Hausen, J{\"u}rgen},
   author={Wrobel, Milena},
    title = {Non-complete rational T-varieties of complexity one},
  journal = {ArXiv e-prints},
   eprint = {http://arxiv.org/pdf/1512.08930v1.pdf},
 keywords = {Mathematics - Algebraic Geometry, 14L30, 13A05, 13F15},
     year = {2015},
}

\bib{Ilt}{article}{
   author={Ilten, Nathan Owen},
   title={Deformations of smooth toric surfaces},
   journal={Manuscripta Math.},
   volume={134},
   date={2011},
   number={1-2},
   pages={123--137},
   issn={0025-2611}
}

\bib{IV}{article}{
   author={Ilten, Nathan Owen},
   author={Vollmert, Robert},
   title={Deformations of rational $T$-varieties},
   journal={J. Algebraic Geom.},
   volume={21},
   date={2012},
   number={3},
   pages={531--562},
   issn={1056-3911}
}

\bib{IV2}{article}{
   author={Ilten, Nathan Owen},
   author={Vollmert, Robert},
   title={Upgrading and downgrading torus actions},
   journal={J. Pure Appl. Algebra},
   volume={217},
   date={2013},
   number={9},
   pages={1583--1604},
   issn={0022-4049},
   review={\MR{3042622}},
   doi={10.1016/j.jpaa.2012.11.016},
}

\bib{Mav}{article}{
   author={Mavlyutov, Anvar},
    title = {Deformations of toric varieties
via Minkowski sum decompositions 
of polyhedral complexes
},
  journal = {ArXiv e-prints},
   eprint = {https://arxiv.org/pdf/0902.0967.pdf},
 keywords = {Mathematics - Algebraic Geometry},
     year = {2011},
}

\bib{Rei}{article}{
   author={Reid, Miles},
   title={Chapters on algebraic surfaces},
   conference={
      title={Complex algebraic geometry},
      address={Park City, UT},
      date={1993},
   },
   book={
      series={IAS/Park City Math. Ser.},
      volume={3},
      publisher={Amer. Math. Soc., Providence, RI},
   },
   date={1997},
   pages={3--159}
}

\bib{Ser}{book}{
   author={Sernesi, Edoardo},
   title={Deformations of algebraic schemes},
   series={Grundlehren der Mathematischen Wissenschaften [Fundamental
   Principles of Mathematical Sciences]},
   volume={334},
   publisher={Springer-Verlag, Berlin},
   date={2006},
   pages={xii+339},
   isbn={978-3-540-30608-5},
   isbn={3-540-30608-0}
}

\end{biblist}
\end{bibdiv}
\end{document}